\def\C{\mathbb{C}}
\def\Z{\mathbb{Z}}
\def\N{\mathbb{N}}
\def\A{\mathbb{A}}
\def\beqn{\begin{equation}}
\def\eeqn{\end{equation}}
\def\beqnn{\begin{equation*}}
\def\eeqnn{\end{equation*}}
\def\beqna{\begin{eqnarray}}
\def\eeqna{\end{eqnarray}}
\def\beqnan{\begin{eqnarray*}}
\def\eeqnan{\end{eqnarray*}}
\newtheorem{theorem}{Theorem}[section]
\newtheorem{lemma}[theorem]{Lemma}
\newtheorem{proposition}[theorem]{Proposition}
\newtheorem{conjecture}[theorem]{Conjecture}
\newtheorem{definition}[theorem]{Definition}
\newtheorem{remark}[theorem]{Remark}
\theoremstyle{remark}
\numberwithin{equation}{section}
\begin{document}

\title{The Bessel Period of U(3) and U(2) involving a non-tempered representation}

\author{Jaeho Haan}

\address{Algebraic Structure and its Applications Research Center(ASARC), Department
of Mathematics, Korea Advanced Institute of Science and Technology}
\email{jaehohaan@gmail.com}

\keywords{automorphic forms, unitary groups, theta correspondence, $L$-functions, period integrals, non-tempered represenation}    

\begin{abstract}
In \cite{Ha}, Neal Harris has given a refined Gross-Prasad conjecture for unitary group as an analogue of Ichino and Ikeda's paper \cite{Ich} concerning special orthogonal groups. In his paper, he stated a conjecture under the assumption that the pair of given  representations should be tempered. In this paper, we consider a specific pair involving a non-tempered representation of $U(3)$ and suggest its refined period formula restriced to $U(2)$.
\end{abstract}

\maketitle

\section{The Bessel Period of U(3) and U(2) involving a non-tempered representation}\label{intro}

We first recall the Refined Gross-Prasad Conjecture stated in \cite{Ha}. Let $E/F$ be a quadratic extension of number fields and $\A_F$,$\A_E$ are their adele rings respectively. Let $V_{n}\subset V_{n+1}$ be hermitian spaces of dimensions $n$ and $n+1$ over $E$, respectively. Consider the unitary groups $U(V_{n})\subset U(V_{n+1})$ defined over $F$.  Write $G_i := U(V_i)$.  Let $\pi_n$ and $\pi_{n+1}$ be irreducible tempered cuspidal automorphic representations of $G_n(\A_F)$ and $G_{n+1}(\A_F)$ respectively, and we fix isomorphisms $\pi_n \cong \otimes_v \pi_{n,v}$ and $\pi_{n+1} \cong \otimes_v \pi_{n+1,v}$.  We suppose that $\operatorname{Hom}_{G_n(k_v)}(\pi_{n+1,v}\otimes \pi_{n,v},\C)\neq 0$ for every place $v$ of $F$.

  We consider the following $G_{n}(\A_F)\times G_{n}(\A_F)$-invariant functional
$$
\mathcal{P}: (V_{\pi_{n+1}}\boxtimes \bar{V}_{\pi_{n+1}})\otimes ({V}_{\pi_{n}}\boxtimes \bar{V}_{\pi_{n}})\to\C
$$
defined by
\beqn\label{pdef}
\mathcal{P}(\phi_1,\phi_2;f_1,f_2):=\left(\int_{[G_n]} \phi_1(g)f_1(g) dg\right)\cdot\left(\int_{[G_n]} \overline{\phi_2(g)f_2(g)} dg\right)
\eeqn
for $\phi_i\in V_{\pi_{n+1}}$, $f_i\in V_{\pi_n} $ and $[G_n]=G_n(F)\setminus G_n(\A_F).$  If $\phi_1=\phi_2=\phi$ and $f_1=f_2=f$, we simply write $\mathcal{P}(\phi,f):=\mathcal{P}(\phi_1,\phi_2;f_1,f_2)$ and we call $\mathcal{P}$ the global period.

On the other hand, there is another $G_n(\A_F)\times G_n(\A_F)$-invariant functional constructed from the local integral of matrix coefficients. To define matrix coefficients, for each place $v$ of $F$, let $F_v$ be its completion of $F$ at $v$ and denote $G_{i,v}:=G_i(F_v)$.  Fix the local pairings
$$
\mathcal{B}_{\pi_{i,v}}:\pi_{i,v}\otimes\bar{\pi}_{i,v}\to\C
$$
so that
$$
\mathcal{B}_{\pi_i} = \prod_v \mathcal{B}_{\pi_{i,v}}
$$
where $\mathcal{B}_{\pi_i}$ is the Petersson pairing
\beqnan
\mathcal{B}_{\pi_i}(f_1,f_2) &:=& \int_{[G_i]} f_1(g_i)\overline{f_2(g_i)}dg_i
\eeqnan
and the $dg_i$ is Tamagawa measures on $G_i(\A_F)$.
For each place $v$, we define a $G_{n,v}\times G_{n,v}$ invariant functional
$$
\mathcal{P}_v^\natural: (\pi_{n+1,v}\boxtimes \bar{\pi}_{n+1,v})\otimes ({\pi}_{n,v}\boxtimes \bar{\pi}_{n,v})
$$
by
$$
\mathcal{P}_v^\natural(\phi_{1,v},\phi_{2,v}; f_{1,v}, f_{2,v}) := \int_{G_{n,v}} \mathcal{B}_{\pi_{n+1,v}}(\pi_{n+1,v}(g_{n,v})\phi_{1,v},\phi_{2,v})\mathcal{B}_{\pi_{n,v}}(\pi_{n,v}(g_{n,v})f_1,f_2) dg_{n,v}.
$$
Here, the $dg_{n,v}$ are local Haar measures such that $\prod_v dg_{n,v}=dg_n$.

Write $\mathcal{P}_v^\natural(\phi_{v},\phi_{v}; f_{v}, f_{v})=: \mathcal{P}_v^\natural(\phi_v,f_v)$ and we set
\beqnan
\Delta_{G_i} &:=& L(M^\vee_{i}(1),0)\\
\Delta_{G_{i,v}} &:=& L_v(M^\vee_{i}(1),0)
\eeqnan
where $M^\vee_{i}(1)$ is the twisted dual of the motive $M_{i}$ associated to $G_{i}$ by Gross in \cite{Gr}.
It is known in \cite[Prop. 2.1]{Ha} that $\mathcal{P}_v^\natural$ converges absolutely if the $\pi_{i,v}$ is tempered.  Furthermore, it is also known that for unramified data $\phi_v,f_v$ satisfying conditions $(1)-(7)$ in \cite[p.6]{Ha}, we have
$$
\mathcal{P}_v^\natural(\phi_v,f_v) = \Delta_{G_{n+1,v}}\frac{L_{E_v}(1/2,BC(\pi_{n,v})\boxtimes BC(\pi_{n+1,v}))}{L_v(1,\pi_{n,v},\operatorname{Ad})L_v(1,\pi_{n+1,v},\operatorname{Ad})}
$$
 \\(Here, $BC(\pi_i)$ is the quadratic base-change of $\pi_i$ to a representation of $GL_i(\A_E)$)

From this observation, we can normailze $\mathcal{P}_v^\natural$ as  $$
\mathcal{P}_v := \Delta_{G_{n+1,v}}^{-1}\frac{L_v(1,\pi_{n,v},\operatorname{Ad})L_v(1,\pi_{n+1,v},\operatorname{Ad})}{L_{E_v}(1/2,BC(\pi_{n,v})\boxtimes BC(\pi_{n+1,v}))}\mathcal{P}_v^\natural
$$ and call this the local period.

Then $$
\prod_v\mathcal{P}_v: (V_{\pi_{n+1}}\boxtimes \bar{V}_{\pi_{n+1}})\otimes ({V}_{\pi_{n}}\boxtimes \bar{V}_{\pi_{n}})\to\C.
$$ is also another $G_n(\A_F)\times G_n(\A_F)$-invariant functional.

The Refined Gross-Prasad Conjecture predicts that these two global $G_n(\A_F)\times G_n(\A_F)$-functionals  $\mathcal{P}$ and $\prod_v\mathcal{P}_v$ differs by only a certain constant, that is the central $L$-value of the product $L$-function. The precise conjecture is as follows :

\begin{conjecture}[Refined Gross-Prasad Conjecture for Unitary groups]\label{con}
$$
\mathcal{P}(\phi,f) = \frac{\Delta_{G_{n+1}}}{2^\beta}\frac{L_E(1/2,BC(\pi_n)\boxtimes BC(\pi_{n+1}))}{L_F(1,\pi_n,\operatorname{Ad})L_F(1,\pi_{n+1},\operatorname{Ad})}\prod_v \mathcal{P}_v(\phi_v,f_v).
$$
(Here $\psi_i$ is the conjectural $L$-parameter for $\pi_i$ and $\beta$ is an integer such that $2^\beta = |S_{\psi_{n+1}}|\cdot |S_{\psi_n}|$ and $S_{\psi_i}:=\operatorname{Cent}_{\widehat{G_i}}(\operatorname{Im}(\psi_i))$ is the associated component group.)
\end{conjecture}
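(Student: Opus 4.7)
The plan is to prove the specific case suggested in the abstract, namely for $n=2$ with a non-tempered representation $\pi_3$ of $U(3)$, by reducing the period on $U(2)$ to a computation accessible through theta correspondence. In the non-tempered case of interest, $\pi_3$ is essentially determined by its Arthur parameter, and one expects $\pi_3$ to arise as a theta lift $\theta_\psi^{U(1),U(3)}(\chi)$ of a character $\chi$ of $U(1)(\A_F)$ (after suitable twist and choice of splitting character). My first step would be to identify $\pi_3$ precisely in this way and write a typical $\phi \in V_{\pi_3}$ as a regularized theta integral $\theta_\varphi(h;\chi)$ for a Schwartz function $\varphi \in \mathcal{S}(V \otimes \A_F)$.

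Next I would substitute this theta expression into the global period
\beqnn
\mathcal{P}(\phi,f) \;=\; \Bigl|\int_{[U(2)]} \phi(g)\,f(g)\,dg\Bigr|^2,
\eeqnn
unfold the theta series, and apply the seesaw identity for the dual pair $(U(2)\times U(2), U(1)\times U(3))$ sitting inside $(U(2),U(3))\times(U(1),U(3))$. This should convert the $U(2)$-period of $\phi \cdot f$ into a pairing on $U(1)$ between the character $\chi$ and the theta lift $\theta_\psi^{U(2),U(1)}(f)$ of $f$ from $U(2)$ down to $U(1)$. At that point the Rallis inner product formula (in the form worked out by Gan--Ichino and Ichino) evaluates $\|\theta(f)\|^2$ in terms of the central value $L(1/2, BC(\pi_2) \otimes \chi_E)$ and local matrix-coefficient integrals, which are exactly the ingredients that appear inside $\prod_v \mathcal{P}_v$. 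I would then expand $BC(\pi_3) = BC(\theta(\chi))$ according to the theta-lift identity $BC(\theta(\chi)) = \chi_E \boxplus 1 \boxplus \chi_E^{-1}$ (or its appropriate variant) so that $L_E(1/2, BC(\pi_2)\boxtimes BC(\pi_3))$ factors into the product of a central $L$-value matching the Rallis formula and two $L$-values that, together with the Petersson normalizations, feed into the adjoint denominators $L(1,\pi_i,\mathrm{Ad})$.

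The final step is bookkeeping: compare the global constants produced by the seesaw/Rallis computation with the conjectural prefactor $\Delta_{G_3}/2^\beta$. The motivic factor $\Delta_{G_3} = L(M_3^\vee(1),0)$ is a product of partial Hecke $L$-values that arise naturally from the measure normalizations in the Siegel--Weil formula underlying Rallis, while $2^\beta = |S_{\psi_3}|\cdot|S_{\psi_2}|$ must be computed from the Arthur parameters explicitly; for the non-tempered $\psi_3$ the component group is strictly larger than in the tempered case, which is precisely why the factor $2^\beta$ enters nontrivially. Simultaneously one verifies, place by place, that the Rallis local factors coincide with the normalized local periods $\mathcal{P}_v$ defined in the excerpt; at unramified places this follows from the known unramified computation cited from Harris, and at ramified and archimedean places it reduces to the convergence and non-vanishing statements for the regularized matrix-coefficient integral.

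The principal obstacle I anticipate is convergence and regularization of $\mathcal{P}_v^\natural$ in the non-tempered case: Proposition 2.1 of \cite{Ha} guarantees absolute convergence only under the tempered hypothesis, so for $\pi_{3,v}$ non-tempered one must either insert an auxiliary convergence factor (e.g. a shift $s$ in an analytic family) and show meromorphic continuation to $s=0$, or replace the matrix coefficient by its regularized analogue as in recent work on the Ichino--Ikeda conjecture for non-tempered representations. Managing this regularization compatibly with both the seesaw unfolding on the global side and the local unramified computation is the delicate point; once it is in place, the remaining steps are essentially the identification of $L$-factors and tracking of normalizing measures.
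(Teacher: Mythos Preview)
The statement you are addressing is Conjecture~1.1, which the paper does \emph{not} prove; it is recorded as a conjecture (due to Harris, after Ichino--Ikeda), with the remark that Harris established $n=1,2$ and Wei~Zhang treated general $n$ under local hypotheses by relative trace formula methods. There is therefore no proof in the paper against which to compare your proposal.

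What your proposal actually targets is Theorem~1.2, the paper's non-tempered analogue for $n=2$ with $\pi_3=\Theta(\bar\sigma)$ a lift from $U(1)$. That theorem is a \emph{different} statement: its right-hand side involves a residue $\operatorname{Res}_{s=0}L_E(s,BC(\pi_2)\otimes\gamma)$ and regularized local periods with the auxiliary factor $\Delta(g_v)^s$, and the non-tempered $\pi_3$ lies strictly outside the tempered hypothesis under which Conjecture~1.1 is formulated. So even a complete execution of your plan would not yield the displayed identity of the conjecture.

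That said, your outline is close in spirit to the paper's proof of Theorem~1.2. Both write $\pi_3$ as a theta lift, apply a seesaw, and invoke Rallis inner product formulas. The main points where the paper's argument differs from or goes beyond your sketch are: (i) the seesaw used is $U(V\oplus L)$ versus $U(W)\times U(W)$, with an auxiliary character $\mu=\omega_{\pi_2}^{-1}\sigma$ on the line $L$, and \emph{three} separate Rallis formulas are applied (for the lifts $U(1)\to U(3)$, $U(1)\to U(1)$, and $U(2)\to U(1)$), not a single one; (ii) the paper also assumes $\pi_2=\Theta(\bar{\mathbb{I}})$ is a theta lift from $U(1)$, which is essential to the explicit $L$-function computations in the key lemma; and (iii) the regularization obstacle you correctly flag is precisely the content of Lemma~5.1, whose proof is a lengthy explicit Cartan-decomposition calculation at split places showing that $\Delta_2(gl)^s$ may be replaced by $\Delta_2(g)^s$ in the limit $s\to 0^+$.
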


In \cite{Ha}, N.Harris proved this conjecture unconditionally for $n=1$ using Waldspurger formula, and conditionally for $n=2$ assuming $\pi_3$ is a $\Theta$-lift of a representation on $U(2)$. Recently, Wei Zhang proved for general case using relative trace formula under some local conditions.\cite{Zhang}

Our goal is to provide an analog of this conjecture for $n=2$ and $\pi_3$ is a theta lift of $U(1)$. Note that in this case, $\pi_3$ is no longer tempered and so the above local periods may diverge. So we first regularize the local period using the function appearing in the doubling method. Once this is done, we can define a regularized local period and this enable us to establish the following formula which can be seen as an analogue of Refined Gross-Prasad conjecture. 

\begin{theorem}\label{thm}
Let $F$ be a totally real field and $E$ a totally imaginary quadratic extension of $F$ such that all the finite places of $F$ dividing 2 do not split in $E$. The unitary groups we are considering here are all associated to this extension. Let $\sigma$ be an automorphic characters of $U(1)(\A_F)$ and $\pi_3=\Theta(\bar{\sigma}),\pi_2=\Theta(\bar{\mathbb{I}})$ be irreducible tempered cuspidal automorphic representations of $U(2)(\A_F)$ which comes from a theta lift of $\sigma$ and trivial character $\mathbb{I}$, respectively. We assume that these two theta lifts are nonvanishing and cuspidal. \\Then for $\phi=\otimes \phi_v \in \pi_3$ and $ f=\otimes f_v\in \pi_2$,$$
 \mathcal{P}(\phi,f) = -\frac{1}{2^3}\cdot \frac{L(3,\chi)}{L^2(1,\chi)}\cdot \frac{L_E(\frac{1}{2},BC( \sigma)\otimes \gamma)\cdot Res_{s=0}(L_E(s,BC(\pi_2)\otimes \gamma))}{L_E(\frac{3}{2},BC(\sigma)\otimes \gamma^3)}\prod_v \mathcal{P}_v(\phi_v,f_v).$$
where $\gamma$ is a character of $\A_E^\times/E^\times$ such that $\gamma|_{\A_F^\times}=\chi_{E/F}$ and for $i=1,2$, $\omega_{\pi_i}$ is the central character of $\pi_i$. The normalized local periods $\mathcal{P}_v$'s are defined by $$\mathcal{P}_v(\phi_v,f_v):=c_v\cdot \lim_{s\to 0}\frac{\zeta_v(2s)}{L_v(s,BC(\pi_{2,v})\otimes\gamma_v)}\cdot \int_{U(2)_v}\mathcal{B}_{\pi_{3,v}}(g_{v}\cdot \phi_{v},\phi_{v})\cdot \mathcal{B}_{\pi_{2,v}}(g_v\cdot f_{v},f_v) \cdot \Delta(g_v)^s dg_{v}.$$
(here, $c_v$ is a constant for each $v$ defined by $$c_v:=\frac{L_v^2(1,\chi_{E_v/F_v})\cdot L_{E_v}(\frac{3}{2},BC(\sigma)\otimes\gamma_v^3)}{L_v(3,\chi_{E_v/F_v})\cdot L_{E_v}(\frac{1}{2},BC(\sigma) \otimes \gamma_v)}$$ and  $\mathcal{B}_{\pi_{i,v}}$'s are the fixed local pairings of $\theta(\bar{\sigma})_v$ s.t. $\mathcal{B}_{\pi_i}=\prod_v \mathcal{B}_{\pi_{i,v}}$ and $\Delta(g_v)$ is some function we will define in Section 3.)
\end{theorem}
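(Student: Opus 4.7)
The strategy is to realize both $\phi\in\pi_3=\Theta(\bar\sigma)$ and $f\in\pi_2=\Theta(\bar{\mathbb{I}})$ as explicit theta integrals over $[U(1)]$, substitute them into the definition of $\mathcal{P}(\phi,f)$, and unfold the inner $U(2)$-integral via the (regularized) Siegel--Weil formula combined with a toric Rankin--Selberg unfolding. Writing $W_1,W_2$ for the skew-Hermitian lines used in the two theta lifts and $\varphi_1,\varphi_2$ for corresponding Schwartz data, one has
$$
\int_{[U(2)]}\phi(g)\,f(g)\,dg
=\int_{[U(W_1)]\times[U(W_2)]}\bar\sigma(h_1)\int_{[U(2)]}\theta_{\varphi_1}(g,h_1)\,\theta_{\varphi_2}(g,h_2)\,dg\,dh_1\,dh_2.
$$
The product of the two one-dimensional theta kernels is itself a theta kernel for the enlarged dual pair $(U(V_2),U(W_1\oplus W_2))$, so the inner $g$-integral is the theta integral of the trivial representation of $U(V_2)$, and by the (regularized) Siegel--Weil formula it equals a (possibly residual) Eisenstein series on $U(W_1\oplus W_2)\simeq U(1,1)$. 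Integrating this Eisenstein series against $\bar\sigma$ on the Levi torus $U(W_1)\times U(W_2)$ unfolds into the central value $L_E(\tfrac{1}{2},BC(\sigma)\otimes\gamma)$; the conjugate factor in $\mathcal{P}$ produces $\operatorname{Res}_{s=0}L_E(s,BC(\pi_2)\otimes\gamma)$, and the denominator $L_E(\tfrac{3}{2},BC(\sigma)\otimes\gamma^3)$ arises from the normalization of the Eisenstein series intertwining operator at the non-central point.

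On the local side, the unregularized functional $\mathcal{P}_v^\natural$ diverges because $\pi_{3,v}$ is non-tempered, so I will insert the doubling-method factor $\Delta(g_v)^s$ and multiply by $\zeta_v(2s)/L_v(s,BC(\pi_{2,v})\otimes\gamma_v)$ before letting $s\to 0$. An unramified doubling-type computation should then show that the limit exists and, after multiplication by the constant $c_v$, equals the expected ratio of local $L$-factors: the simple pole of $L_{E_v}(s,BC(\pi_{2,v})\otimes\gamma_v)$ at $s=0$ — a manifestation of the non-tempered theta-lift structure of $\pi_{2,v}$ — cancels precisely the pole of the matrix-coefficient integral at $s=0$ and produces a finite local period. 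At ramified places the existence of the regularized limit will be established indirectly, by relating the inserted integral to the local doubling zeta integral and invoking its known rationality and functional equation.

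Combining the global unfolding with the Euler product $\prod_v c_v$ recovers the global ratio of $L$-values appearing in the theorem, while the constant $-\tfrac{1}{2^3}$ reflects the orders of the component groups $|S_{\psi_3}|$, $|S_{\psi_2}|$ for the theta-lifted parameters together with a sign inherited from the Siegel--Weil first-term identity. The main obstacle, I expect, will be matching the two regularizations: the global unfolding evaluates the Siegel--Weil Eisenstein series at a point where it only exists as a residue, so orders of poles in the auxiliary parameter $s$ must be tracked very carefully to ensure that local and global divergences cancel in exactly the same way. A secondary technical hurdle is uniform convergence of the regularized local integral in $s$ at ramified places, which will require a detailed analysis of the function $\Delta(g_v)^s$ (to be introduced in Section 3) together with sharp bounds for matrix coefficients of the non-tempered theta lift $\pi_{3,v}$.
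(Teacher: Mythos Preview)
Your seesaw is not set up correctly. The representation $\pi_3=\Theta(\bar\sigma)$ lives on $U(3)=U(V\oplus L)$ (with $\dim V=2$, $\dim L=1$), not on $U(2)$; the Bessel period restricts $\phi$ to the subgroup $U(V)$. The theta kernel realizing $\phi$ is therefore attached to the pair $(U(V\oplus L),U(W))$, and its restriction to $g\in U(V)$ is \emph{not} a theta kernel for $(U(V),U(W_1))$: writing $\varphi=\varphi^V\otimes\varphi^L$, an extra factor $\theta_{\varphi^L}(1,h_1)$ survives that your displayed identity drops. Consequently the product of the two kernels is not a theta kernel for $(U(V),U(W_1\oplus W_2))$, and the Siegel--Weil step you propose does not apply as written.

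The paper uses the opposite seesaw, with $U(V\oplus L)$ on the large side and $U(W)$ diagonally embedded in $U(W)\times U(W)$ on the other. After inserting the character $\mu=\omega_{\pi_2}^{-1}\sigma$ on $U(L)$, seesaw duality converts the period into $\int_{[U(W)]}\theta(\overline{f_{\pi_2}},\varphi^V)(h)\,\theta(\bar\mu,\varphi^L)(h)\,\sigma(h)\,dh$. Since $U(W)=U(1)$ this collapses to a product of two Petersson norms on $U(1)$, evaluated by the Rallis inner product formulas for the lifts $U(2)\to U(1)$ (second-term range; this is the source of $\operatorname{Res}_{s=0}L_E(s,BC(\pi_2)\otimes\gamma)$, not ``the conjugate factor in $\mathcal{P}$'') and $U(1)\to U(1)$ (giving $L_E(\tfrac12,BC(\mu)\otimes\gamma)$). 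Note also that $\pi_2$ is tempered; the non-tempered representation is $\pi_3$, so your attribution of the local pole to ``the non-tempered theta-lift structure of $\pi_{2,v}$'' is backwards.

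On the local side, unfolding the Rallis integrals and making the change of variables $g\mapsto gl$ produces $\Delta_2(g_vl_v)^s$ rather than $\Delta_2(g_v)^s$; the real technical content (Lemma~5.1 of the paper) is to show that swapping one for the other does not affect the regularized limit. This is automatic at non-split places (where $U(L)_v$ lies in the maximal compact of $U(V)_v$) but requires a lengthy explicit Cartan-decomposition computation at split places, and it is precisely there that the hypothesis $\pi_2=\Theta(\bar{\mathbb I})$ is actually used, via the matrix-coefficient asymptotics of $B(\gamma_1^2,\gamma_1^{-2})$.
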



\begin{remark}In the $SO(n)$ version of the conjecture, Ichino was the first who considered the non-tempered case in \cite{Ich0}, and recently, Yannan Qiu has brought his result into adelic setting including the former.\cite{Qiu}. Thus this article can be considered as an analogue of \cite{Qiu}.
\end{remark}

The rest of the paper is organized as follows: in Section \ref{thetachapt}, we introduce the theta correspondence for unitary groups, as well as the Weil representation. In Section $\ref{rallis}$, we give several versions of the Rallis Inner Product Formula. With all these things put together, we prove Theorem $\ref{thm}$ in Section \ref{finalchapt} under the assumption of the lemma which we shall prove in Section 5.

\section{The $\Theta$-correspondence for Unitary groups}\label{thetachapt}

We review the Weil Representation and $\Theta$-correspondence. Most of this section are excerpts from \cite{Ha}.
\subsection {The Weil Representation for Unitary Groups} \label{Weil}
In this subsection, we introduce the Weil representation. Since the constructiuons of global and local Weil representation are similar, we will treat both of them simultaneously.  For an algebraic group $G$, if the same statement can be applied to both the local and global cases, we will not use the distinguished notation $G(F_v)$ and $G(\A_F)$, but just refer them to $G$. \\ Let $(V,\langle,\rangle_{V})$ and $(W,\langle,\rangle_{W})$ be two hermitian and skew-hermitian spaces of dimension $m,n$ respectively. Denote $G:=U(V)$ and $H:=U(W)$ and we regard them as an algebraic group over $F$.\\

\noindent Define the symplectic space
$$
\mathbb{W} := \operatorname{Res}_{E/F} V \otimes_E W
$$
with the symplectic form $$
\langle v \otimes w,v' \otimes w' \rangle_{\mathbb{W}} := \frac{1}{2}\operatorname{tr}_{E/F}\left(\langle v,v'\rangle_{V}\otimes {\langle w,w' \rangle}_{W}\right).
$$
We also consider the associated symplectic group $Sp(\mathbb{W})$ preserving $\langle \cdot,\cdot \rangle_{\mathbb{W}}$ and the metaplectic group $\widetilde{Sp}(\mathbb{W})$ satisfying the following short exact sequence :

$$1\to \C^\times\to\widetilde{Sp}(\mathbb{W})\to Sp(\mathbb{W})\to 1.$$\\ 
Let $\mathbb{X}$ be a Lagrangian subspace of $\mathbb{W}$ and we fix an additive character $\psi:\A_F/F\to\C^\times$ (globally) or $\psi:F_v\to\C^\times$ (locally). Then we have a Schr\"odinger model of the Weil Representation $\omega_\psi$ of $\widetilde{Sp}(\mathbb{W})$ on $\mathcal{S}(\mathbb{X})$, where $\mathcal{S}$ is the Schwartz-Bruhat function space.\\

Throughout the rest of the paper, let $\chi_{E/F}$ be the quadratic character of $\A_F^{\times}/F^{\times}$ or $F_v^{\times}$ associated to $E/F$ by the global and local class field theory. (For split place $v$, we define $\chi_{E/F}$ the trivial character.) And we also fix some unitary character $\gamma$ of $\A_{E}^{\times}/E^{\times}$ or $E_v^{\times}$ whose restriction to $A_F^{\times}$ or $F_v^{\times}$ is $\chi_{E/F}$.\\

 If we set  \beqnan \gamma_{V} &:=&\gamma^m \\ \gamma_{W}&:=&\gamma^n, \eeqnan
then $(\gamma_{V}, \gamma_{W})$ gives a splitting homomorphism $$
\iota_{\gamma_{V}, \gamma_{W}}:G\times H\to \widetilde{Sp}(\mathbb{W})
$$ and so by composing this to $\omega_\psi$, we have a Weil representation of $G \times H$ on $\mathbb{S}(\mathbb{X})$.

When the choice of $\psi$ and $(\gamma_{V}, \gamma_{W})$ is fixed as above, we simply write $$\omega_{W,V}:= \omega_\psi\circ \iota_{\gamma_{V},\gamma_{W}}.$$

\begin{remark} For $n=1$, the image of $H=U(1)$ in $\widetilde{Sp}(\mathbb{W})$ coincides with the image of the center of $G$, so we can regard the Weil representation of $G\times H$  as the representation of $G$.
\end{remark}

\subsection{The Local $\Theta$-Correspondence}\label{local}  In this subsection, we deal with only the local case and so we suppress $v$ from the notation. (Note that if $v$ is non-split, $E$ is the quadratic extension of $F$ and in the split case, $E=F\oplus F$.) As in previous subsection, for non-split $v$, we denote $\chi_{E/F}$ the quadratic character associated to $E/F$ by local class field theory and for the split case, $\chi_{E/F}$ is trivial. 

\subsubsection{Howe Duality}
Suppose that $(G,G')$ is a dual reductive pair of unitary groups in a symplectic group $Sp(\mathbb{W})$. (Recall that a dual reductive pair $(G,G')$ in $Sp(\mathbb{W})$ is a pair of reductive subgroups of $Sp(\mathbb{W})$ which are mutual centralizers, i.e. $Z_{Sp(\mathbb{W})}(G)=G'$ and $Z_{Sp(\mathbb{W})}(G')=G$.)

After fixing the characters $\psi$ and $\gamma$ as in subsection 2.1, we obtain a Weil representation $(\omega_{\psi,\gamma},\mathcal{S})$ of $G\times G'$. For an irreducible admissible representation $\pi$ of $G$, the maximal $\pi$-isotypic quotient of $\omega$, say $\mathcal{S}(\pi)$, is of the form $$
\mathcal{S}(\pi)\cong\pi\otimes \Theta(\pi).
$$
The \emph{Howe Duality Principle} says that if $\Theta(\pi)$ is nonzero, then
\begin{enumerate}
\item $\Theta(\pi)$ is a finite-length admissible representation of $G'$.
\item $\Theta(\pi)$ has the unique maximal semisimple quotient $\theta(\pi)$ and it is irreducible.
\item The correspondence $\pi\mapsto\theta(\pi)$ gives a bijection between the irreducible admissible representations of $G$ and $G'$ that occur as the maximal semisimple quotients of $\mathcal{S}$.

\end{enumerate}

The third is called the local $\Theta$-correspondence. The Howe duality is now known to hold for all places. (see \cite{Gan0})

\subsection{The Explicit Local Weil representation for $GL(3)(F_v)$} \label{local}
The local Weil representation of unitary groups is explicitly described in \cite{Ha1}. In particular, if $v$ splits, $U(3)(F_v)=\{(A,B)\in M_3(F_v) \ |      \ AB=I\}$ and so by sending $(x,x^{-1})$ to $x$, it is identified to $GL(3)(F_v)$. We record here the explicit local Weil representation of $GL(3)(F_v)$ for later use. \\Let $X=F_v^3$ be a 3-dimensional vector space over $F_v$ with a fixed basis. Then there is a Weil-representation $\omega$ of $GL(3)(F_v)$ realized on $\mathcal{S}(F_v^3)$, which is uniquely determined by the following formula:
\beqna \label{weil}\omega(g)f(x)=\gamma(\det(g))|\operatorname{det}(g)|^{\frac{1}{2}}f(g^tx), \quad \quad x\in F_v^3 \eeqna
Since $E_v=F_v \times F_v$ and $\gamma$, we defined in [\ref{Weil}], is trivial on $F_v$, we can write $\gamma=(\gamma_1,\gamma_1^{-1})$ for some unitary character $\gamma_1$ of $F_v$. Using the above isomorphism of $U(3)$ and $GL(3)$, we can write $\gamma(\operatorname{det}(g))=\gamma_1^2(\operatorname{det}(g)).$ We will use this formula in Section. 5.

\subsection{The Global $\Theta$-Correspondence}

The global $\Theta$-correspondence is realized using $\Theta$-series. To do this, we first define the theta kernel as follows. For any $\varphi\in\mathcal{S}(\mathbb{X}(\A_F))$, let
$$
\theta(g,h,\varphi) := \sum_{\lambda\in \mathbb{X}(F)}\omega_{W,\gamma_W,V,\gamma_V,\psi}(g,h)(\varphi)(\lambda).
$$
Note that this is slowly increasing function. Thus if $f$ is some cusp form on $G(\A_F)$, it is rapidly decreasing and so we can define
\beqn\label{strange}
\theta(f,\varphi)(h) := \int_{[G]} \theta(g,h,\varphi)\overline{f(g)}\ dg
\eeqn
where $dg$ is the Tamagawa measure. \\

\noindent Then the $\Theta$-lift of a cuspidal representation of $G$ as follows:
\begin{definition}
For a cuspidal automorphic representation $\pi$ of $G(\A_F)$, 
$$
\Theta_{V,W,\gamma_W,\gamma_V,\psi}(\pi) = \{\theta(f,\varphi): f\in\pi, \varphi\in\mathcal{S}(\mathbb{X}(\A_F))\}
$$ is called the $\Theta$-lift of $\pi$ with data $(\gamma_W,\gamma_V,\psi).$
\end{definition}

\noindent The Howe Duality Principle implies the following. (\cite{Ge3}, proposition 1.2)

\begin{proposition} If $\Theta(\pi)$ is a cuspidal representation of $U(V)(\A)$, then it is irreducible and is isomorphic to the restricted tensor product $\otimes_v \theta(\pi_v)$.\end{proposition}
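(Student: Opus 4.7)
The plan is to deduce the statement from the local Howe Duality Principle (item (3) of Subsection 2.2) combined with the factorizable nature of the global theta integral defined in \eqref{strange}.

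Since $\Theta(\pi)$ is assumed cuspidal, I view it as a subrepresentation of $L^2_{\mathrm{cusp}}(H(F)\backslash H(\A_F))$, which decomposes discretely. Let $\Pi = \otimes_v \Pi_v$ be any irreducible summand of $\Theta(\pi)$. The theta integral gives an $H(\A_F)$-equivariant surjection
$$T : \bar{\pi} \otimes \mathcal{S}(\mathbb{X}(\A_F)) \longrightarrow \Theta(\pi), \qquad \bar{f} \otimes \varphi \longmapsto \theta(f,\varphi).$$
Composing with the orthogonal projection $\Theta(\pi) \to \Pi$ yields a nonzero $H(\A_F)$-equivariant map from the factorizable space $\bar{\pi} \otimes \mathcal{S}(\mathbb{X}(\A_F)) = \otimes_v \bigl(\bar{\pi}_v \otimes \mathcal{S}(\mathbb{X}(F_v))\bigr)$ to the irreducible $\Pi$. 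Standard Flath-type factorization of intertwiners between restricted tensor products forces this to split at every place, yielding a nonzero $H(F_v)$-equivariant map $\bar{\pi}_v \otimes \mathcal{S}(\mathbb{X}(F_v)) \to \Pi_v$. Thus $\Pi_v$ arises as an irreducible quotient of $\Theta(\pi_v)$, and local Howe duality forces $\Pi_v \cong \theta(\pi_v)$ for every $v$; hence $\Pi \cong \otimes_v \theta(\pi_v)$ abstractly.

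To upgrade this to irreducibility of $\Theta(\pi)$ itself, I would show that the $\otimes_v\theta(\pi_v)$-isotypic component of $\Theta(\pi)$ has multiplicity at most one. Locally, Howe duality identifies the maximal $\pi_v$-isotypic quotient of the Weil representation with $\pi_v \otimes \theta(\pi_v)$, so $\dim \Hom_{G(F_v)\times H(F_v)}(\omega_v, \pi_v \boxtimes \theta(\pi_v)) = 1$. Globalizing, $T$ factors through the canonical map $\pi \otimes (\otimes_v \theta(\pi_v)) \to \Theta(\pi)$, which is unique up to a single global scalar because it is determined at each place up to scalar. Two distinct copies of $\otimes_v \theta(\pi_v)$ sitting inside $\Theta(\pi)$ would then produce two linearly independent such maps, contradicting this one-dimensionality.

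The main obstacle is precisely this gluing from local uniqueness to global uniqueness; the cuspidality hypothesis is what makes it work, since it places $\Theta(\pi)$ inside the discrete spectrum where the Petersson pairing is nondegenerate. Concretely one can verify the multiplicity statement by comparing inner products of theta lifts via the Rallis inner product formula of Section \ref{rallis}, which factorizes into local pairings whose non-vanishing forces the global realization of $\otimes_v\theta(\pi_v)$ to be unique. This is the content of Proposition 1.2 of \cite{Ge3} invoked in the statement.
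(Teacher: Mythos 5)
The paper offers no argument of its own here: the proposition is quoted from \cite{Ge3} (Proposition 1.2), so there is no internal proof to compare against. Your sketch is, in outline, the standard argument underlying that citation, and it is essentially correct: cuspidality gives semisimplicity, each irreducible summand $\Pi$ receives a nonzero equivariant map from $\bar\pi\otimes\mathcal{S}(\mathbb{X}(\A_F))$, factorization plus local Howe duality identify $\Pi_v\cong\theta(\pi_v)$, and one-dimensionality of the local Hom spaces rules out higher multiplicity. Three points deserve tightening. First, to conclude that $\Pi_v$ is a quotient of $\Theta(\pi_v)$ you need more than $H(\A_F)$-equivariance of $T$: the theta integral is also invariant under the diagonal $G(\A_F)$-action on $\bar\pi\otimes\omega$ (automorphy of the kernel plus invariance of $dg$), and it is precisely this invariance that places the local map in $\Hom_{G(F_v)\times H(F_v)}(\omega_v,\pi_v\boxtimes\Pi_v)$, i.e.\ exhibits $\Pi_v$ as a quotient of the maximal $\pi_v$-isotypic quotient; you never state it. Second, in the multiplicity-one step the two maps attached to distinct summands $\Pi_1\ne\Pi_2$ are not \emph{obviously} linearly independent; the clean way to finish is to argue that if they were proportional then, since $T$ surjects onto $\Theta(\pi)$, the projection onto $\Pi_2$ would factor through the projection onto $\Pi_1$, which fails on nonzero vectors of $\Pi_1$ (killed by $p_2$, fixed by $p_1$), so proportionality is impossible and the one-dimensionality of the Hom space is contradicted. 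Third, the closing appeal to the Rallis inner product formula is unnecessary and does not by itself deliver the uniqueness; the argument is already complete once the factorization of global intertwiners into local ones (your ``Flath-type'' step, which uses admissibility and the irreducibility of $\otimes_v\theta(\pi_v)$) is granted.
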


\begin{remark}\label{char} Since we integrated $\overline{f}$ (instead of $f$) against the theta series, $\pi$ and $\Theta(\pi)$ have the same central characters.
\end{remark}

\begin{remark}In the theory of theta lift, there are two main issues, that is, the cuspidality and non-vanishing of the theta lift. The cuspidality issue was treated by Rallis in terms of so-called tower property.\cite{R} So to make our Theorem (\ref{thm}) not vacuous, we record the criterion in [\ref{non}] which ensures the non-vanishing of two theta lifts $\pi_3$ and $\pi_2$.
\end{remark}

\section{The Rallis Inner Product Formula}\label{rallis}

The Rallis inner product formula enables us to express the Petersson inner product of the global theta lift with respect to the source information. Since we will need three different version of Rallis inner product formulas, we record them for lifts from $U(1)$ to $U(3)$, $U(1)$ to $U(1)$ and $U(1)$ to $U(2)$. To give a uniform description, we introduce some related notions.

\subsection{Global and Local zeta-integral} \label{zeta}

Let $V$ be a hermitian space over $E$ of dimension $m$, and $W$ be a skew-hermitian space of dimension $n$.  Let $V^-$ be the same space as $V$, but with hermitian form $-\langle\cdot,\cdot\rangle_V$.  Note that $U(V)=U(V^-)$. Let $\tau$ be a irreducible cupspidal automorphic representation of $U(V)$.\\ Denote $G:=U(V)=U(V^-), H:= U(W)$, $G^\diamond := U(V\oplus V^-)$ and $i:G \times G \to G^\diamond$ be the inclusion map  $U(V) \times U(V^-) \hookrightarrow U(V \oplus V^-)$. Let $v$ be a finite place of $F$ and $\mathcal{O}_v$ the ring of integer of $F_v$ and denote by $\varpi$ a generator of its maximal ideal. We fix a maximal compact subgroup $K=\prod_vK_v$ of $G$ such that 
$K_v:=G(\mathcal{O}_v)$ for finite places and $K_v:=G(F_v)\cap U(2m)$ for archimedean places.
Let $P$ be a Siegel-parabolic subgroup of $G^\diamond$ stabilizing $V^{\triangle}:=\{(x,x)\in V \oplus V^-\}$ with Levi-component $GL(V^{\triangle})$ and $\tilde{K}$ a maximal compact subgroup of $G^\diamond$ such that $i(K\times K)\hookrightarrow \tilde{K}$ and $G^\diamond=P\tilde{K}$. Let $I(s,\gamma_W):= \operatorname{Ind}_{P(\A_F)}^{G^\diamond(\A_F)} (\gamma_W\circ\det)\cdot |\det|^s$ be the degenerate principal series representation induced from the character $\gamma_W$ of $\A_E^{\times}$ and $|\det|^s$. (Here, we took $\gamma_W$ as the one we defined in [\ref{Weil}] and the determinants are taken with respect to $GL(V^\Delta)$ which is isomorphic to the Levi of $P$.)

Then for $\Phi_s \in I(\gamma_W,s)$, we define the Eisenstein series

$$
E(\Phi_s,\tilde{g}) := \sum_{x\in P(F)\backslash G^\diamond(F)}\Phi_s(x\tilde{g})
$$
for $\tilde{g}\in G^\diamond$. Then for $f_1, f_2 \in \tau$, we can define

\begin{definition} The Piatetski-Shapiro-Rallis zeta integral is defined as follows:
$$
Z(s,f_1,f_2,\Phi_s,\gamma_W) := \int_{[G\times G]} f_1(g_1)\overline{f_2(g_2)} E(\Phi_s, \iota(g_1, g_2))\gamma_W^{-1}(\operatorname{det}_{U(V^-)} g_2)dg_1 dg_2.
$$
\end{definition}
This integral converges only for $\operatorname{Re}(s)\gg0$. However, once the convergence is ensured, it can be factored into the product of the local-zeta integrals. So we define the local zeta-integrals. Assume that $\Phi_s=\otimes_v \Phi_{s,v}$ and $f_i=\otimes_v f_{i,v}$. Then for each place $v$, the local zeta-integral is defined by 

$$
Z_v(s,f_{1,v}, f_{2,v},\Phi_{s,v}) := \int_{U(V)_v} \Phi_{s,v}(i(g_v, 1)){\langle \pi_v(g_v) f_{1,v},f_{2,v}\rangle_{\pi_v}}dg_v$$ 

We note that the integral defining the $Z_v$ converges for $\operatorname{Re}(s)$ sufficiently large. However, $Z_v$ can be extended to all of $\C$ by meromorphic continuation. For large $s$, there is a factorization theorem of the zeta integral. (See \cite{Ge1} for more detail)
\begin{theorem} \label{3.2}For $\operatorname{Re}(s) \gg 0$,
$$Z(s, f_1, f_2, \Phi_s, \gamma_W) = \prod_v Z_v(s,f_{1,v}, f_{2,v},\Phi_{s,v})$$
\end{theorem}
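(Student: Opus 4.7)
The plan is to prove Theorem \ref{3.2} by the classical doubling-method unfolding in the style of Piatetski-Shapiro--Rallis (and Gelbart in the unitary setting): substitute the defining series for the Eisenstein series into $Z(s,f_1,f_2,\Phi_s,\gamma_W)$, reduce to a single open orbit via cuspidality, change variables to decouple the two copies of $G$, and then factor the resulting adelic integral into an Euler product.

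First I would unfold the Eisenstein series: for $\operatorname{Re}(s)\gg 0$, the absolute convergence of $E(\Phi_s,\cdot)$ deep in the Weyl chamber combined with the rapid decay of the cusp forms $f_1,\overline{f_2}$ on $[G\times G]$ justifies interchanging sum and integral, so that
$$
Z(s,f_1,f_2,\Phi_s,\gamma_W) = \sum_{x\in P(F)\backslash G^\diamond(F)} \int_{[G\times G]} \Phi_s(x\,\iota(g_1,g_2)) f_1(g_1)\overline{f_2(g_2)}\gamma_W^{-1}(\det g_2)\,dg_1 dg_2.
$$
I would then regroup this sum according to the $\iota(G\times G)(F)$-orbits on $P(F)\backslash G^\diamond(F)$, i.e.\ by the finitely many double cosets in $P\backslash G^\diamond/\iota(G\times G)$.

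The geometric heart of the argument is the fact that, for the Siegel parabolic $P$ of $G^\diamond=U(V\oplus V^-)$ stabilizing $V^\triangle$, the double coset space has a \emph{unique open orbit} whose stabilizer in $G\times G$ is precisely the diagonal $G^\triangle$. Every other double coset has stabilizer contained in a proper parabolic subgroup of $G\times G$, so after unfolding its contribution involves the constant term of $f_1$ (or $\overline{f_2}$) along a proper parabolic of $G$ and hence vanishes by cuspidality of $\tau$. I expect this geometric/cuspidality step to be the main obstacle, since it requires identifying the representatives for each non-open double coset, checking that each stabilizer is contained in a proper parabolic, and verifying the relevant vanishing; for the doubling configuration this is well understood but not automatic.

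Once the non-open orbits are eliminated, the zeta integral collapses to
$$
Z(s,f_1,f_2,\Phi_s,\gamma_W) = \int_{G^\triangle(F)\backslash (G\times G)(\A_F)} \Phi_s(\iota(g_1,g_2)) f_1(g_1)\overline{f_2(g_2)}\gamma_W^{-1}(\det g_2)\,dg_1 dg_2.
$$
Changing variables $g_1 \mapsto g\, g_2$ (with $g\in G(\A_F)$) separates the two factors: the $g_2$-integral becomes the global Petersson pairing $\mathcal{B}_\tau(\tau(g)f_1,f_2)$ (the $\gamma_W^{-1}$-twist being absorbed into the pairing), and what remains is an integral over $G(\A_F)$ of $\Phi_s(\iota(g,1))$ against this pairing. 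Finally, because $\Phi_s=\bigotimes_v\Phi_{s,v}$, $\mathcal{B}_\tau=\prod_v\mathcal{B}_{\tau_v}$, and $dg=\prod_v dg_v$ all factor over places, the resulting adelic integral splits into the Euler product $\prod_v Z_v(s,f_{1,v},f_{2,v},\Phi_{s,v})$ stated in the theorem. Absolute convergence at each stage is ensured by taking $\operatorname{Re}(s)$ sufficiently large, using the standard tempered growth of the sections $\Phi_{s,v}$ together with the matrix-coefficient bounds coming from unitarity of $\tau_v$.
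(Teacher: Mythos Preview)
Your outline is correct and is exactly the standard Piatetski--Shapiro--Rallis unfolding argument. Note, however, that the paper does not actually prove Theorem~\ref{3.2}: it simply states the factorization and refers the reader to the literature (Piatetski--Shapiro and Rallis, \emph{L-functions for the classical groups}) for details. Your sketch matches that reference, so there is nothing to compare beyond observing that you have supplied the argument the paper chose to cite rather than reproduce.
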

The local-zeta integral has a simple form for unramified places. Take $S$ to be a sufficiently large finite set of places of $F$ such that for all $v\notin S$, the relevant data is unramified, and the local vectors $f_{i,v}$ are normalized spherical vectors so that $\langle f_{1,v}, f_{2,v}\rangle_{\pi_v}=1$.
Recall that $m= \dim_E V$, $n=\dim_E W$ and set
$$
d_m(s,\gamma_W) := \prod_{r=0}^{m-1} L(2s + m -r,\chi_{E/F}^{n+r}).
$$

It is known that for $v \notin S$, $Z_v$ has the following simple form,

\beqna \label{normal}
Z_v(s,f_{1,v}, f_{2,v},\Phi_{s,v}) =\frac{L_v(s+1/2,\pi\otimes\gamma_W)}{d_{m,v}(s,\gamma_W)}
\eeqna

and so we can normalize them defining $Z_v^{\#}$ by 
\beqnan
Z_v^{\#}(s,f_{1,v}, f_{2,v},\Phi_{s,v}) =\frac{d_{m,v}(s,\gamma_W)}{L_v(s+1/2,\pi\otimes\gamma_W)}\cdot Z_v(s,f_{1,v}, f_{2,v},\Phi_{s,v})
\eeqnan

Thus, we can rewrite Theorem \ref{3.2} as follows:
\\For $f_1,f_2\in\tau$, we have
\beqna \label{normal1}
{Z(s, f_1, f_2, \Phi_s, \gamma_W)} =\frac{L(s+1/2,\pi\otimes\gamma_W)}{d_{m}(s,\gamma_W)}\cdot \prod_v Z_v^{\#}(s,f_{1,v}, f_{2,v},\Phi_{s,v})
\eeqna

\subsection{The Siegel-Weil section}
The Rallis Inner Product Formula relates the Petersson inner product of the global theta lifts to the global zeta-integral for a special  section $\Phi_s \in I(s,\gamma_W)$, so called Siegel-Weil section. In this section, we give the definition of the Siegel-Weil section introducing the doubled Weil representation. 

The setting for the doubled Weil representation is as follows.\\

\noindent We have
$$
\mathbb{W} := \operatorname{Res}_{E/F} 2V\otimes_E W
$$
where $2V := V\oplus V^-$.  We also denote
$$
V^\nabla := \{(v,-v):v\in V\}\subset V\oplus V^-.
$$
Since $V^\nabla \otimes W$ is a Lagrangian subspace of $\mathbb{W}$ over $F$, with some fixed choice of characters $\psi$ and $\gamma$, we have a Schr\"odinger model of the Weil representation $\tilde{\omega}$ of $G^\diamond \times H$ realized on $\mathcal{S}((V^\nabla\otimes W)).$

Now, fix polarizations
\beqnan
V &=& X^+\oplus Y^+\\
V^- &=& X^-\oplus Y^-
\eeqnan
and denote
\beqnan
X &:=& X^+\oplus X^-\\
Y &:=& Y^+\oplus Y^-.
\eeqnan
Then 
$$
2V = X\oplus Y
$$
and so we have another Lagrangian $X\otimes W$ of $\mathbb{W}$.

\noindent If we set \beqnan
\mathbb{X} &:=& X\otimes W\\
\mathbb{X}^+ &:=& X^+\otimes W\\
\mathbb{X}^- &:=& X^-\otimes W,
\eeqnan
then there is a $U(V)(\A_F)\times U(V^-)(\A_F)$-intertwining map
$$
\rho_{m,n}: \mathcal{S}(\mathbb{X}^+(\A_F))\otimes\mathcal{S}(\mathbb{X}^-(\A_F))\to \mathcal{S}(\mathbb{X}(\A_F))\to \mathcal{S}((V^\nabla\otimes W)(\A_F))
$$
where the first map is the obvious one, and the second map is given by the Fourier transform. Furthermore, it satisfies $\rho_{m,n}(\varphi_1\otimes \bar{\varphi_2})(0)=<\varphi_1, \varphi_2>$ and so $\big(\tilde{\omega}(i(g,1))\cdot \rho_{m,n}(\varphi_1\otimes \bar{\varphi_2)}\big)(0)=<\omega_{W,V}(g)\cdot\varphi_1, \varphi_2>$.(\cite[p.182]{Li}) Let $s_m=\frac{n-m}{2}$. By the explicit formula for $\tilde{\omega}$ described in \cite{Ku2}, there is an intertwining map $[\ ]: \mathcal{S}(V^\nabla\otimes W) \to I(s_m,\gamma_W)$ given by $\Phi \to f_\Phi^{s_m}(\tilde{g})=\tilde{\omega}(\tilde{g})\Phi(0)$. We can also extend $f_\Phi^{s_m}$ to $f_\Phi^{s} \in I(s,\gamma_W)$ for all $s \in \C$ by defining $f_\Phi^s:= f_\Phi^{s_m}\cdot |\det|^{s-s_m}$ and call this the Siegel-Weil section in $I(s,\gamma_W)$. (Here the determinant map was taken as in \ref{zeta}.) Then we can define the function $\Delta_m$ of $G$ as $\Delta_m(g):=|\det(i(g,1))|$ and using $\Delta_m$, we can write the Siegel-Weil section as
\label{1}\beqna
f_{[\rho_{m,n}(\varphi_1\otimes \bar{\varphi_2})]}^s(g)=\langle \omega_{W,V}(g)\cdot\varphi_1, \varphi_2\rangle\cdot \Delta_m(g)^{s-s_m}.
\eeqna

Note that $\Delta_m(g)$ is $K\times K$ invariant and $\Delta_m(1)=1$. (For $k_1, k_2 \in K$, $(k_1gk_2,1)=(k_1,k_1)\cdot(g,1)\cdot(k_2,k_1^{-1})$ and $(k_1,k_1)\in P$, $(k_2,k_1^{-1})\in \tilde{K}$.)
Using the similar argument of Prop.6.4 in \cite{Ge1}, Yamana[\cite{Ya1}, Lemma A.4.] computed $\Delta_m(g_v)$ explicitly for split place $v$ of $F$ not dividing 2. We record his computation for splitting places.\\

(1) $v:$ non-archimedean place : Let $v$ be a finite place of $F$ which splits in $E$ and not divide 2. Let $\mathcal{O}_v$ be the ring of integer of $F_v$ and $\varpi$ a generator of its maximal ideal. Since $v$ splits, $U(m)(F_v)\simeq GL(m)(F_v)$ and by Cartan decomposition, $ GL(m)(F_v)=K_mD_m^+K_m$ where $K_m=GL(m)(\mathcal{O}_v)$ and $D_m^+=\operatorname{diag}[\varpi^{a_1}, \cdots , \varpi^{a_m}]$. Then,
\beqna \label{height} \Delta_m(g_v)=|\varpi|^{\sum_{i=1}^m |a_i|}
\eeqna

(2) $v:$ archimedean place : Let $v$ be a finite place of $F$ which splits in $E$ and let $[F_v:\mathbb{R}]=2$. Write $K_m=\{g\in GL(m)(F_v) \ |\  ^{t}\bar{g}g=I_n\}$. If $g=k_1dk_2$ with $k_1,k_2\in K_m$ and $d=\text{diag}[d_1,d_2,\cdots,d_m]$ with positive and reals $d_i$, then \beqna \label{height2} \Delta_m(g_v)=2^{mts}\prod_{i=1}^{m}(d_i^{-1}+d_i)^{-ts}.\eeqna
\begin{remark} \label{3.3}Since $|a+b|\ne|a|+|b|$, we cannot expect $\Delta_m(g_vl_v)\ne\Delta_m(g_v)\Delta_m(l_v)$ for central diagonal matrix $l_v=\operatorname{diag}[\varpi^{c}, \cdots , \varpi^{c}]\in GL(m)(F_v)$.

\end{remark}

Now, we are ready to state the three versions of Rallis Inner Product formula. The first one is as follows;

\subsection{Lifting from $U(1)$ to $U(3)$} Here, $\operatorname{dim}V=1$, $\operatorname{dim}W=3$ and $\tau$ is a irreducible automorphic representation of $U(1)(\A_F)$. Suppose that $f_i=\otimes_v f_{i,v} \in \tau, \mbox{  }\varphi_1=\otimes_v \varphi_{1,v}\in \mathcal{S}(\mathbb{X}^+(\A_F))$ and $\varphi_2=\otimes_v \varphi_{2,v}\in \mathcal{S}(\mathbb{X}^-(\A_F))$. Let $\Phi_{s,v}\in  I(s,\gamma^3)$ is a holomorphic Siegel-Weil section given by $[\rho_{1,3}(\varphi_1\otimes\bar{\varphi_2})]$. Then,
\begin{theorem}\label{thm3.4}
$$
{\langle \theta(\bar{f}_1, \varphi_1), \theta(\bar{f}_2, \varphi_2)\rangle_{\Theta(\bar\tau)}} = \frac{L_E(\frac{3}{2}, BC(\tau)\otimes \gamma^3)}{L(3,\chi_{E/F})}\prod_v Z_v^\sharp(1, {f}_{1, v}, {f}_{2, v}, \Phi_{1, v})
$$
where
$$
Z_v^\sharp := \frac{L_v(3,\chi_{E_v/F_v})}{L_{E_v}(\frac{3}{2}, BC(\tau_v)\otimes \gamma_v^3)}\cdot Z_v
$$
\end{theorem}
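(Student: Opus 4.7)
The plan is the classical doubling method: unfold the Petersson pairing of the two theta lifts into a Piatetski--Shapiro--Rallis zeta integral via the Siegel--Weil formula, then factor and extract the $L$-functions using the unramified computation already recorded in Subsection 3.1. First I would start from definition (\ref{strange}) and write the left side as
$$
\int_{[H]}\left(\int_{[G]}\theta(g_1,h,\varphi_1)f_1(g_1)\,dg_1\right)\overline{\left(\int_{[G]}\theta(g_2,h,\varphi_2)f_2(g_2)\,dg_2\right)}dh.
$$
Because $\dim W-\dim V=2>\dim V=1$ we are in the stable range, so the theta kernels are square-integrable on $[H]$ and the $f_i$ are rapidly decreasing on $[G]$; this justifies interchanging the $h$-integral with the two $g_i$-integrals. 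The two $U(W)$-theta kernels then combine into a single doubled theta kernel $\tilde\theta$ for $(U(V\oplus V^-),U(W))$ through the intertwiner $\rho_{1,3}$ of Subsection 3.2, while the complex conjugation on the second factor contributes a twist by $\gamma_W^{-1}\circ\operatorname{det}_{U(V^-)}$, converting the inner product into
$$
\int_{[G\times G]}f_1(g_1)\overline{f_2(g_2)}\,\gamma_W^{-1}(\operatorname{det}_{U(V^-)}g_2)\left(\int_{[U(W)]}\tilde\theta(\iota(g_1,g_2),h;\rho_{1,3}(\varphi_1\otimes\bar\varphi_2))\,dh\right)dg_1\,dg_2.
$$

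Next I would invoke the Siegel--Weil formula. Since $\dim W$ exceeds $\dim(V\oplus V^-)=2$ by $1$, the pair $(U(V\oplus V^-),U(W))$ lies in Weil's original convergent range, so the formula applies with no regularization and identifies the inner $[U(W)]$-integral with the Eisenstein series $E(f_\Phi^{s_1},\iota(g_1,g_2))$ at $s_1=(n-m)/2=1$ for $\Phi=\rho_{1,3}(\varphi_1\otimes\bar\varphi_2)$. Under the map $[\,\cdot\,]$ of (\ref{1}) this is precisely the Siegel--Weil section $\Phi_1$. Hence the Petersson inner product becomes $Z(1,f_1,f_2,\Phi_1,\gamma^3)$. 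The Siegel Eisenstein series on $U(V\oplus V^-)$ converges absolutely for $\operatorname{Re}(s)>m/2=1/2$, so the value $s=1$ lies safely in the region of absolute convergence and no residue analysis is required.

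The remaining step is a direct application of Theorem \ref{3.2} together with the unramified formula (\ref{normal}) at $m=1$, $n=3$. Here $d_1(s,\gamma^3)=L(2s+1,\chi_{E/F}^3)=L(2s+1,\chi_{E/F})$ (using $\chi_{E/F}^2=1$), and $L(s+\tfrac12,\tau\otimes\gamma^3)=L_E(s+\tfrac12,BC(\tau)\otimes\gamma^3)$; specializing (\ref{normal1}) at $s=1$ then produces the stated identity. The hard part will be the first step: one must carefully bookkeep the splitting characters $\gamma_V,\gamma_W$ when passing from a product of two theta kernels on $(U(V),U(W))$ to a single doubled theta kernel, so that the twist $\gamma_W^{-1}(\operatorname{det}g_2)$ matches exactly the one built into the definition of $Z(s,f_1,f_2,\Phi_s,\gamma_W)$; a sign error or a missing character at this stage would alter the final constant.
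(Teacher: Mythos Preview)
Your argument is correct and is essentially the same approach as the paper's: the paper simply cites Theorem~2.1 of \cite{Li} (which is exactly the stable-range doubling/Siegel--Weil computation you outline) and then applies the normalization (\ref{normal1}), whereas you have unpacked what that citation contains. Your bookkeeping of the convergent range ($n=3\ge 2m=2$, Eisenstein series absolutely convergent at $s=1>m/2$) and of the splitting twist $\gamma_W^{-1}\circ\det_{U(V^-)}$ matches Li's setup, so the only difference is expository depth rather than method.
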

\begin{proof}
This follows immediately from Theorem 2.1 in \cite{Li} and (\ref{normal1}) the normalization of the local-zeta integral.
\end{proof}

The next following two versions of Rallis Inner product formula come from Lemma 10.1 in \cite{Ya}:

\subsection{Lifting from $U(1)$ to $U(1)$}\label{1,1}Here, $\operatorname{dim}V=\operatorname{dim}W=1$ and $\tau$ is a irreducible automorphic representation of $U(1)(\A_F)$. Suppose that $f_i=\otimes_v f_{i,v} \in \tau, \mbox{  }\varphi_1=\otimes_v \varphi_{1,v}\in \mathcal{S}(\mathbb{X}^+(\A_F))$ and $\varphi_2=\otimes_v \varphi_{2,v}\in \mathcal{S}(\mathbb{X}^-(\A_F))$. Let $\Phi_{s,v}\in  I(s,\gamma)$ is a holomorphic Siegel-Weil section given by $[\rho_{1,1}(\varphi_1\otimes\bar{\varphi_2})]$. By \cite[Theorem 4.1] {Ya} and \ref{normal1} , we have
\begin{theorem}\label{3.5}
$$
{\langle \theta(\bar{f}_1, \varphi_1), \theta(\bar{f}_2, \varphi_2)\rangle_{\Theta(\bar\tau)}} = \frac{1}{2}\cdot\frac{L_E(\frac{1}{2}, BC(\tau)\otimes \gamma)}{L(1,\chi_{E/F})}\prod_v Z_v^\sharp(0, {f}_{1, v}, {f}_{2, v}, \Phi_{0, v})
$$
where
$$
Z_v^\sharp = \frac{L_v(1,\chi_{E_v/F_v})}{L_{E_v}(\frac{1}{2}, BC(\tau_v)\otimes \gamma_v)} \cdot Z_v
$$
\end{theorem}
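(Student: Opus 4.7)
The plan is to reduce the statement to two external inputs: Yamana's regularized Rallis inner product formula \cite[Theorem 4.1]{Ya}, and the factorization (\ref{normal1}) of the global doubling zeta integral into local zeta integrals. Since the test vectors and the Siegel--Weil section are already tensor-decomposable, the only substantive work is to identify what the global ``correction'' factor becomes in the boundary case $m = n = 1$.

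The first step is to invoke Yamana's theorem to express $\langle\theta(\bar{f}_1,\varphi_1),\theta(\bar{f}_2,\varphi_2)\rangle_{\Theta(\bar\tau)}$ as a multiple of the global zeta integral $Z(s, f_1, f_2, \Phi_s, \gamma_W)$ at the special point $s = s_m = \frac{n-m}{2}$. Since $m = n = 1$, we have $s_m = 0$, which sits on the boundary of the region of absolute convergence both for the Eisenstein series defining $\Phi_s$ and for $Z(s,\ldots)$. Yamana's regularized Siegel--Weil / Rallis formula is designed precisely for this boundary situation and yields an identity of the shape
$$\langle\theta(\bar{f}_1,\varphi_1),\theta(\bar{f}_2,\varphi_2)\rangle_{\Theta(\bar\tau)} \;=\; \tfrac{1}{2}\cdot Z(0, f_1, f_2, \Phi_0, \gamma_W),$$
with the overall constant $\tfrac{1}{2}$ being the usual first-term coefficient at the crossover point.

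The second step is to apply the factorization (\ref{normal1}) to the right-hand side. For $m = 1$ and $n = 1$, the normalizing denominator in (\ref{normal1}) collapses to a single factor,
$$d_1(s, \gamma_W) \;=\; L(2s + 1,\chi_{E/F}),$$
so that $d_1(0, \gamma_W) = L(1, \chi_{E/F})$. The numerator $L(s + 1/2, \tau \otimes \gamma_W)$ evaluated at $s = 0$ becomes $L(1/2, \tau \otimes \gamma)$, which coincides with the Hecke $L$-function $L_E(1/2, BC(\tau) \otimes \gamma)$ by the definition of the base change for a character of $U(1)$. Substituting these into (\ref{normal1}) and combining with the previous step produces exactly the formula in the statement, together with the local normalization $Z_v^\sharp = \tfrac{L_v(1,\chi_{E_v/F_v})}{L_{E_v}(1/2, BC(\tau_v)\otimes\gamma_v)}\cdot Z_v$.

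The only genuine obstacle here is the analytic one at $s = 0$: both the Eisenstein series and the global zeta integral are generally singular on this boundary, so one cannot directly mimic the factorization argument used for the non-boundary case $m < n$ of Theorem \ref{thm3.4}. That analytic content --- in particular, the correct first-term identity with the constant $\tfrac{1}{2}$ --- is exactly what Yamana establishes, and I would treat his result as a black box. Once it is quoted, everything else is bookkeeping with $d_1(s,\gamma)$ and the base change $L$-function.
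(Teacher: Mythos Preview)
Your proposal is correct and matches the paper's own approach exactly: the paper simply cites \cite[Theorem~4.1]{Ya} together with the factorization (\ref{normal1}), and your write-up fleshes out the bookkeeping (identifying $s_m=0$, $d_1(s,\gamma)=L(2s+1,\chi_{E/F})$, and $L(1/2,\tau\otimes\gamma)=L_E(1/2,BC(\tau)\otimes\gamma)$) in precisely the intended way.
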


\subsection{Lifting from $U(2)$ to $U(1)$} Here, $\operatorname{dim}V=2$, $\operatorname{dim}W=1$ and $\tau$ is a irreducible automorphic representation of $U(2)(\A_F)$. Suppose that $f_i=\otimes_v f_{i,v} \in \tau, \mbox{  }\varphi_1=\otimes_v \varphi_{1,v}\in \mathcal{S}(\mathbb{X}^+(\A_F))$ and $\varphi_2=\otimes_v \varphi_{2,v}\in \mathcal{S}(\mathbb{X}^-(\A_F))$. Let $\Phi_{s,v}\in  I(s,\gamma)$ be a holomorphic Siegel-Weil section given by $[\rho_{2,1}(\varphi_1\otimes\bar{\varphi_2})]$. Then,
\begin{theorem}\label{3.6}
$$
{\langle \theta(\bar{f}_1, \varphi_1), \theta(\bar{f}_2, \varphi_2)\rangle_{\Theta(\bar\tau)}} =  \frac{-Res_{s=0}(L_E(s, BC(\tau)\otimes \gamma))}{L(1,\chi_{E/F})}\prod_v Z_{v,s=-\frac{1}{2}}^\sharp(s, {f}_{1, v}, {f}_{2, v}, \Phi_{s, v})
$$
where
\beqnan
Z_{v,s=-\frac{1}{2}}^\sharp(s, {f}_{1, v}, {f}_{2, v}, \Phi_{s, v})=
\lim_{s\to 0} \frac{L_v(2s+1,\chi_{E_v/F_v})\cdot \zeta_v(2s)}{L_{E_v}(s, BC(\tau_v)\otimes \gamma_v)} \cdot Z_{v}(s-\frac{1}{2}, {f}_{1, v}, {f}_{2, v}, \Phi_{s-\frac{1}{2}, v})
\eeqnan
\end{theorem}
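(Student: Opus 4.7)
The plan is to apply the regularised Rallis inner product identity of Yamana (\cite[Lemma 10.1]{Ya}) to the dual pair $(U(V), U(W))$ with $m = \dim V = 2$, $n = \dim W = 1$, and then feed the result into the local factorisation (\ref{normal1}). Since $s_m = (n-m)/2 = -\tfrac{1}{2}$ lies strictly outside the range of absolute convergence of both the Siegel--Eisenstein series $E(\Phi_s, \cdot)$ and the doubling zeta integral $Z(s, f_1, f_2, \Phi_s, \gamma)$, the naive first-term identity used in Theorems \ref{thm3.4} and \ref{3.5} is unavailable; what one does have is Yamana's second-term identity, which expresses $\langle \theta(\bar f_1, \varphi_1), \theta(\bar f_2, \varphi_2)\rangle_{\Theta(\bar\tau)}$ as an explicit constant multiple of the residue at $s = -\tfrac{1}{2}$ of $Z(s, f_1, f_2, \Phi_s, \gamma)$.

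For $m = 2$, $n = 1$ the denominator $d_m(s, \gamma)$ from (\ref{normal}) simplifies to
\beqnan
d_2(s, \gamma) = L(2s+2, \chi_{E/F}) \cdot \zeta(2s+1),
\eeqnan
so by (\ref{normal1}),
\beqnan
Z(s, f_1, f_2, \Phi_s, \gamma) = \frac{L_E(s+\tfrac{1}{2}, BC(\tau) \otimes \gamma)}{L(2s+2, \chi_{E/F}) \cdot \zeta(2s+1)} \prod_v Z_v^{\#}(s, f_{1,v}, f_{2,v}, \Phi_{s,v}).
\eeqnan
Letting $s \to -\tfrac{1}{2}$ (equivalently $t := s + \tfrac{1}{2} \to 0$), the numerator $L_E(t, BC(\tau) \otimes \gamma)$ has a simple pole at $t = 0$ in the intended application, where $BC(\tau)$ is an isobaric sum of Hecke characters coming from a theta lift; this contributes $\operatorname{Res}_{s=0} L_E(s, BC(\tau) \otimes \gamma)$. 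The denominator specialises to $L(1, \chi_{E/F}) \cdot \zeta(0)$, and $\zeta(0) = -\tfrac{1}{2}$ supplies the overall minus sign together with the factor of two that absorbs Yamana's constant $\tfrac{1}{2}$ and leaves exactly $1/L(1, \chi_{E/F})$. Meanwhile the local ratios $L_v(2s+1, \chi_{E_v/F_v}) \cdot \zeta_v(2s) / L_{E_v}(s, BC(\tau_v) \otimes \gamma_v)$, after the substitution $s \mapsto s - \tfrac{1}{2}$, are precisely $d_{2,v}(s-\tfrac{1}{2}, \gamma) / L_{E_v}(s, BC(\tau_v) \otimes \gamma_v)$, i.e.\ the content of converting $Z_v(s-\tfrac{1}{2},\cdot)$ into $Z_{v, s=-1/2}^{\sharp}$ as defined in the statement.

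The main technical step is the first one: carefully unpacking Yamana's Lemma 10.1 for this particular dual pair and verifying that his global normalising constant, together with $\zeta(0) = -\tfrac{1}{2}$, produces the precise factor $-1/L(1, \chi_{E/F})$ appearing in the theorem. Once this sign-and-constant bookkeeping is settled, the remainder is a routine rearrangement of Euler factors parallel to the (one-line) proofs of Theorems \ref{thm3.4} and \ref{3.5}.
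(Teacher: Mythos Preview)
Your overall strategy---apply Yamana's Lemma~10.1(2) and then the Euler factorisation~(\ref{normal1})---is exactly what the paper does. The gap is in the constant bookkeeping. You assert that ``the denominator specialises to $L(1,\chi_{E/F})\cdot\zeta(0)$, and $\zeta(0)=-\tfrac12$''; but in the conventions of this paper (and of \cite{Ya}) $\zeta_F$ is the \emph{completed} Dedekind zeta function of $F$, which has a simple \emph{pole} at $s=0$, not a finite value. (The identity $\zeta(0)=-\tfrac12$ belongs to the uncompleted Riemann zeta and has no meaning for general $F$.) Accordingly, Yamana's Lemma~10.1(2) does not give $\langle\theta,\theta\rangle$ as a residue of $Z(s,\cdot)$ at $s=-\tfrac12$; it gives the \emph{value} there, which is finite precisely because the pole of $L_E(s,BC(\tau)\otimes\gamma)$ at $s=0$ cancels against the pole of $\zeta_F(2s)$ sitting inside $d_2(s-\tfrac12,\gamma)$. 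The paper evaluates this limit as a ratio of leading Laurent coefficients, using $\operatorname{Res}_{s=0}\zeta_F(s)=-1$ (whence the residue of $\zeta_F(2s)$ at $s=0$ equals $-\tfrac12$, which is why your numerics accidentally agree) together with $L(1,\chi_{E/F})\neq 0$. Your justification for the pole of $L_E(s,BC(\tau)\otimes\gamma)$ is also imprecise: it is not that $BC(\tau)$ is isobaric a priori, but rather that the non-vanishing of $\Theta(\bar\tau)$ forces the pole, by \cite[Theorem~9.1, Lemma~10.2]{Ya}.

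Two further points the paper addresses but you omit. First, the existence of each local limit $Z^{\sharp}_{v,s=-1/2}$ is not automatic: the paper observes that $d_{2,v}(s-\tfrac12,\gamma)\Phi_{s-\tfrac12,v}$ is a good (though not holomorphic) section, so that \cite[Theorem~5.2]{Ya} guarantees the quotient by $L_{E_v}(s,BC(\tau_v)\otimes\gamma_v)$ is holomorphic. Second, the case $\Theta(\bar\tau)=0$ must be dealt with separately; there $L_E(s,BC(\tau)\otimes\gamma)$ is holomorphic at $s=0$ by \cite[Lemma~10.2]{Ya}, so the residue on the right vanishes and the identity holds trivially.
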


\begin{proof}
By Lemma 10.1 (2) in \cite{Ya} and (\ref{normal1}), $$
{\langle \theta(\bar{f}_1, \varphi_1), 
\theta(\bar{f}_2, \varphi_2)\rangle_{\Theta(\bar\tau)}} = \frac{1}{2}\cdot \lim_{s\to0} \frac{L_E(s, BC(\tau)\otimes \gamma)}{L(2s+1,\chi_{E/F})\zeta_F(2s)}\prod_v Z_{v}^\sharp(s-\frac{1}{2}, {f}_{1, v}, {f}_{2, v}, \Phi_{s-\frac{1}{2}, v}). $$   By Theorem 9.1 and Lemma 10.2 in \cite{Ya}, if $\theta(\bar{\tau})$ doesn't vanish, $L_E(s, BC(\tau)\otimes \gamma)$ has a simple pole at $s=0$. Note that $\zeta_F(s)$ is the completed Dedekind zeta function of $F$ and it has a simple pole at $s=0$. Since $Res_{s=0}\zeta_F(s)=-1$ and $L(1,\chi_{E/F})$ is nonzero, we get $$\lim_{s\to0} \frac{L_E(s, BC(\tau)\otimes \gamma)}{L(2s+1,\chi_{E/F})\zeta_F(2s)}=\frac{- Res_{s=0}(L_E(s, BC(\tau)\otimes \gamma))}{L(1,\chi_{E/F})}. $$ For each $v$, $d_2(s-\frac{1}{2},\gamma_W)\cdot \Phi_{s-\frac{1}{2}, v}(g)$ is not holomorphic but good section (see, \cite{Ya}), so  by Theorem 5.2 in\cite{Ya}, the quotient of $L_v(2s+1,\chi_{E_v/F_v}) \cdot \zeta_v(2s)\cdot Z_{v}(s-\frac{1}{2}, {f}_{1, v}, {f}_{2, v}, \Phi_{s-\frac{1}{2}, v})$ by $L_{E_v}(s, BC(\tau_v)\otimes \gamma_v)$ is holomorphic. Thus each $Z_{v,s=-\frac{1}{2}}^\sharp(s, {f}_{1, v}, {f}_{2, v}, \Phi_{s, v})$ exists and it proves theorem when $\theta(\bar{\tau})$ is nonvanishing. When $\theta(\bar{\tau})$ is zero, then $L_E(s, BC(\tau)\otimes \gamma)$ is holomorphic by Lemma 10.2 in \cite{Ya}, and so $Res_{s=0}(L_E(s, BC(\tau)\otimes \gamma)$ is zero. So the theorem also holds in this case.
\end{proof}

\subsection{The local-to-global criterion for the non-vanishing of the theta lifts}\label{non} Since we will assume $\pi_3$ and $\pi_2$ are non-vanishing, we descrive the non-vanishing criterion of the theta lifts $\pi_3, \pi_2$ as well as from $U(1)$ to $U(1)$.

\subsubsection{Theta lift from $U(1)$ to $U(3)$}

Let $\tau$ be a character of $U(1)$. By the [Lemma 5.3 , \cite{Li}], the Euler product $L_E(s, BC(\tau)\otimes \gamma^3)$ absolutely converges and nonzero at $s=\frac{3}{2}$. Then by (\ref{thm3.4}), we see that $\pi_3=\Theta(\bar{\tau})$ does not vanish when the local zeta integral $Z_v(1, \cdot) \in Hom(I(1,\gamma_v^3) \otimes \tau_v^{\lor} \otimes \tau_v)$ is nonzero for all the places $v$.

\subsubsection{Theta lift from $U(1)$ to $U(2)$}{\label{12}} Let $\tau$ be a character of $U(1)$. Then by [Theorem 5.10, \cite{Ha}], the theta lift $\pi_3=\Theta(\bar{\tau})$ does not vanish when $L_E(1,BC(\tau)\otimes \gamma^2)\ne0$ and local theta lift $\theta_v(\bar{\tau_v})\ne0$ for all the places $v$.

\subsubsection{Theta lift form $U(1)$ to $U(1)$} Let $V$ (resp, $W$) be a hermitian (resp, skew-hermitian) space of dimension 1 over $E$. Let $\tau$ be a character of $U(V)(\A_F)$. Then by (\ref{1,1}) and [Theorem 6.1, \cite{Ha1}], the theta lift $\Theta(\bar{\tau})$ is non-vanishing if and only if $L_E(\frac{1}{2}, BC(\tau)\otimes \gamma)\ne 0$ and for all $v$, $\epsilon_v(\frac{1}{2},\tau_v\otimes \gamma_v, \psi_v)= \epsilon_V\cdot \epsilon_W$. (Here, $\epsilon_V(s,\cdot),\epsilon_W(s,\cdot)$ are the local root number and $\epsilon_v$ is the sign of $V_{E_v}$,$W_{E_v}$ respectively.)

\section{Proof of Theorem 1.2}\label{finalchapt}We remind the reader of our setting.

\subsection{The Setup}$F$ is a totally real number field and $E$ a totally imaginary quadratic extension of $F$.\begin{footnote}{Indeed, this assumption is not essential. See Remark \ref{5.2}} \end{footnote}
We consider the following seesaw diagram:
\beqn\label{mainss}
\xymatrix{U(V\oplus L)\ar @{-}[d]\ar @{-}[dr]& U(W)\times U(W)\ar @{-}[dl]\ar @{-} [d]\\ U(V)\times U(L) & U(W)}
\eeqn

(Here, $V$ is a $2$-dimensional hermitian space over $E/F$ and $W$ is a $1$-dimensional skew-hermitian space over $E/F$ and $L$ is a hermitian line over $E/F$. \\ 
Using the seesaw duality, we can relate the period integral in Theorem to the triple product integral over $U(W)$. 

We first fix the following:
\begin{itemize}
\item $\pi_2=\otimes \pi_{2,v}$ is an irreducible, cuspidal, tempered, automorphic representation of $U(V)(\A_F).$
\item $\sigma=\otimes \sigma_v$ is an automorphic character of $U(W)(\A_F).$
\item $\mu := w_{\pi_{2}}^{-1}\cdot \sigma$ is an automorphic character of $U(L)(\A_F)$, where $\omega_{\pi_2}$ is the central character of $\pi_2$ and $\mu=\otimes \mu_v$ where $\mu_v=w_{\pi_{2,v}}^{-1}\cdot \sigma_v$.
\item $(\omega_{V\oplus L,W},\psi)$ is a Weil representation of $\widetilde{Sp}(\mathbb{W})(\A_F)$.  (See Chapter \ref{thetachapt} for notation.)
\end{itemize}
We also fix local pairings $\mathcal{B}_{\pi_{2,v}}, \mathcal{B}_{\sigma_v}, \mathcal{B}_{\mu_v}$ such that  $\prod_v \mathcal{B}_{\pi_{2,v}}, \prod_v\mathcal{B}_{\sigma_v}$, $\prod_v\mathcal{B}_{\mu_v}$ give the respective Petersson inner products on the global representation and $\mathcal{B}_{\mu_v}(\mu_v,\mu_v)=\mathcal{B}_{\sigma_v}(\sigma_v,\sigma_v)$ for all places $v$. (Since $\mathcal{B}_{\sigma}(\sigma,\sigma)=\mathcal{B}_{\mu}(\mu,\mu)=\operatorname{Vol}([U(1)])$, these choices can stand with no conflict.)\\

\noindent We take $\gamma_L,\gamma_W = \gamma$ and $\gamma_V=\gamma^2$, where $\gamma$ is a unitary character of $\A_E^\times/E^\times$ such that $\gamma|_{\A_F^\times} = \chi_{E/F}$and fix additive character $\psi:\A_F \to \C.$ After fixing these splitting data $(\gamma_V,\gamma_L,\gamma_W,\psi)$, we can define the relevent theta lifts and denote them $\Theta(\bar\pi_2):=\Theta_{W,V\gamma_W,\gamma_V,\psi}(\bar\pi_2)$ on $U(W)(\A_F)$, $\Theta(\bar\sigma):=\Theta_{W,V\oplus L,\gamma_W\gamma_V,\gamma_L,\psi}(\bar\sigma)$ on $U(V\oplus L)(\A_F)$, and $\Theta(\bar\mu):=\Theta_{W,L\gamma_W,\gamma_L,\psi}(\bar\mu)$ on $U(W)(\A_F)$. We assume that all $\Theta$-lifts we consider here are non-vanishing and cuspidal.
\subsection{Proof of Theorem \ref{thm}} In the course of the proof, we will regard $\mu$ and $\sigma$ as automorphic forms in the 1-dimension representations of $\mu$ and $\sigma$ and take $f_{\mu}=\mu$, and $f_{\sigma}=\sigma$. Since $\omega_{W,V\oplus L}=\omega_{W,V} \otimes \omega_{W,L}$, we prove the theorem assuming $\varphi=\varphi_1 \otimes \varphi_2$ for $\varphi_1\in \omega_{W,V}$ and $\varphi_2\in \omega_{W,L}$.\\

$Step\ 1.$ First, we consider another the global period
$$
\mathcal{P}': V_{\Theta(\bar\sigma)}\otimes {V}_{\pi_2} \otimes {V}_\mu \to \C
$$
defined by \beqnan
\mathcal{P}'(f_{\Theta(\bar\sigma)}, f_{\pi_{2}}, f_{\mu}) :=\left| \int_{[U(V)\times U(L)]} f_{\Theta(\bar\sigma)}(i(g,l)){f_{\pi_2}(g) f_\mu(l)}dgdl\right|^2.
\eeqnan

(Here, $i$ is the natural embedding $i:U(V)\times U(L)\hookrightarrow U(V\oplus L)$.) \\By making a change of variables $g \to gl$, we see that $$\int_{[U(V)\times U(L)]} f_{\Theta(\bar\sigma)}(i(g,l)){f_{\pi_2}(g) f_\mu(l)}dgdl=\int_{[U(V)\times U(L)]} f_{\Theta(\bar\sigma)}(i(gl,l)){f_{\pi_2}(gl) f_\mu(l)}dgdl.$$

By Remark \ref{char}, the central character of $\Theta(\bar{\sigma})$ is $\omega_\sigma^{-1}=\sigma^{-1}$.  So, after observing that $(l,l)$ is in the center of $U(V\oplus L)$ and $l$ is in the center of $U(V)$, we have
\beqnan
&&\int_{[U(V)\times U(L)]} f_{\Theta(\bar\sigma)}(i(gl,l)){f_{\pi_2}(gl)\mu(l)dgdl}\\ 
&&= \int_{[U(V)\times U(L)]} \omega_{\Theta(\bar\sigma)}(l)\omega_{\pi_2}(l)\mu(l) f_{\Theta(\bar\sigma)}|_{U(V)}(g) {f_{\pi_2}(g)}dgdl\\ 
&&= \int_{[U(V)\times U(L)]} f_{\Theta(\bar\sigma)}|_{U(V)}(g) {f_{\pi_2}(g)}dgdl\\
&&= \operatorname{Vol}([U(L)]) \int_{[U(V)]} f_{\Theta(\bar\sigma)}|_{U(V)}(g) {f_{\pi_2}(g)}dg\\
&&= 2\int_{[U(V)]} f_{\Theta(\bar\sigma)}|_{U(V)}(g) {f_{\pi_2}(g)}dg. \quad \text{ (note that} \operatorname{Vol}([U(1)])=2)
\eeqnan

Thus, we get $\mathcal{P}(f_{\Theta(\bar{\sigma})},f_{\pi_2})=\frac{1}{4}\mathcal{P}'(f_{\Theta(\bar\sigma)}, f_{\pi_2}, f_{\mu})$. \\

$Step\ 2.$ By the global seesaw duality, we see that $$\int_{[U(V)\times U(L)]} \theta(\bar{{\sigma}},\varphi)(i(g,l)){f_{\pi_2}(g) \mu(l)}dgdl=\int_{[U(W)]} \theta(\bar{f_{\pi_2}},\varphi_1)(h) \theta(\bar{{\mu}},\varphi_2)(h){\sigma}(h)dh$$ (The order change of integration is justified by the rapidly decreasing property of cusp forms and the moderate growth of the theta series.)\\
Since $\Theta(\bar{\pi_2})$ and $\Theta(\bar{\mu})$ have central characters $\omega_{\pi_2}^{-1}$ and $\mu^{-1}$ respectively, we see that $$\mathcal{P}'(\theta(\bar{f_{\pi_2}},\varphi_1)), \theta(\bar{f_{\mu}},\varphi_2)), f_{\sigma})=|\theta(\bar{f_{\pi_2}},\varphi_1)(1)\theta(\bar{\mu},\varphi_2)(1)\sigma(1)|^2\cdot\operatorname{Vol}([U(W)])^2.$$ For $\tau=\pi_2$ or $\mu$ and $i=1,2$  , $$\mathcal{B}_{\Theta(\bar{\tau})}(\theta(\bar{f_{\tau}},\varphi_i),\theta(\bar{f_{\tau}},\varphi_i))=|\theta(\bar{f_{\tau}},\varphi_i)(1)|^2\cdot \operatorname{Vol}([U(W)]) \mbox{  and  } \sigma(1)=1.$$ Thus we can write
\beqnan
\mathcal{P}'(\theta(\bar{f_{\pi_2}},\varphi_1)), \theta(\bar{f_{\mu}},\varphi_2)), f_{\sigma})=\mathcal{B}_{\Theta(\bar{\pi_2})}(\theta(\bar{f_{\pi_2}},\varphi_1),\theta(\bar{f_{\pi_2}},\varphi_1))\cdot \mathcal{B}_{{\Theta(\bar{\mu})}}(\theta(\bar{f_{\mu}},\varphi_2),\theta(\bar{f_{\mu}},\varphi_2)).
\eeqnan By theorem \ref{3.5} and \ref{3.6}, we see that $\mathcal{P}'(\theta(\bar{f_{\pi_2}},\varphi_1)), \theta(\bar{f_{\mu}},\varphi_2)), f_{\sigma})=$ $$-\frac{1}{2}\cdot \frac{L_E(\frac{1}{2}, BC(\mu)\otimes \gamma)}{L(1,\chi_{E/F})}\cdot \frac{Res_{s=0}(L_E(s, BC(\pi_2)\otimes \gamma))}{L(1,\chi_{E/F})}\cdot \prod_v Z_v^{\sharp}(f_{\mu_v},f_{\pi_{2,v}},\varphi_{1,v},\varphi_{2,v})$$
where $Z_v^{\sharp}(f_{\pi_{2,v}},f_{\mu_v},\varphi_{1,v},\varphi_{2,v})=Z_{v,s=-\frac{1}{2}}^\sharp(s, {f}_{\pi_{2,v}}, {f}_{\pi_{2,v}}, \Phi_{s, v})\cdot Z_v^\sharp(0, {f}_{\mu_v}, {f}_{\mu_v}, \Phi_{0, v})$
and \\$\Phi_{s, v}=[\rho_{2,1}(\varphi_1\otimes\bar{\varphi_1})]\in I(s,\gamma)$, $\Phi_{0, v}=[\rho_{1,1}(\varphi_2\otimes\bar{\varphi_2})]\in I(0,\gamma)$.\\
(Note that $ Z_v^{\sharp}(f_{\pi_{2,v}},f_{\mu_v},\varphi_{1,v},\varphi_{2,v})=1$ for unramified data)\\

$Step\ 3.$ Let us make use several abbreviations for various matrix coefficients.
\beqnan&& \mathcal{B}_{\omega_{W, V}}^{\varphi_{1,v}}(g_v):=\mathcal{B}_{\omega_{W, V}}(\omega_{W,V}(g_v,1)\cdot\varphi_{1,v}, \varphi_{1,v})\mbox{ , } \mathcal{B}_{\omega_{W, L}}^{\varphi_{2,v}}(l_v):=\mathcal{B}_{\omega_{W,L}}(\omega_{W, L}(1,l_v)\cdot\varphi_{2,v},\varphi_{2,v}),\\&& \mathcal{B}_{\omega_{W,V\oplus L}}^{\varphi_v}(g_v,l_v):=\mathcal{B}_{\omega_{W,V\oplus L}}(\omega_{W,V\oplus L}(i(g_v,1),l_v)\varphi_v,\varphi_v)=\mathcal{B}_{\omega_{W,V\oplus L}}(\omega_{W,V\oplus L}(i(g_vl_v,l_v),1)\varphi_v,\varphi_v), \\&& \mathcal{B}_{\pi_{2,v}}^{ f_{\pi_{2,v}}}(g_v):=\mathcal{B}_{\pi_{2,v}}(g_v\cdot   f_{\pi_{2,v}},f_{\pi_{2,v}})\mbox{  ,  }  \mathcal{B}_{\tau_v}^{ f_{\tau_v}}(l_v):=\mathcal{B}_{\tau_v}(l_v\cdot f_{\tau_v},f_{\tau_v}) \mbox{ for $\tau=\sigma$ or $\mu$.} \eeqnan
(Recall that in the Weil representation $\omega_{W,V\oplus L}$, the elements in $U(W)$ act as the central element in $U(V\oplus L)$.)

If we unfold  $Z_{v,s=-\frac{1}{2}}^\sharp(s, {f}_{\pi_{2,v}}, {f}_{\pi_{2,v}}, \Phi_{s, v})$ in $Z_v^{\sharp}(f_{\pi_{2,v}},f_{\mu_v},\varphi_{1,v},\varphi_{2,v})$, we can write \\$Z_v^{\sharp}(f_{\pi_{2,v}},f_{\mu_v},\varphi_{1,v},\varphi_{2,v})=$
$$\lim_{\Re(s)\to 0+} \frac{L_v(2s+1,\chi_{E_v/F_v})\cdot \zeta_v(2s)}{L_{E_v}(s, BC(\pi_{2,v})\otimes \gamma_v)} \cdot \int_{U(V)_v} Z_v^\sharp(0, {f}_{\mu_v}, {f}_{\mu_v}, \Phi_{0, v})\mathcal{B}_{\omega_{W,V}}^{\varphi_{1,v}}(g_v) \mathcal{B}_{\pi_{2,v}}^{ f_{\pi_{2,v}}}(g_v)\Delta_2(g_v)^{s}dg_v$$
$$=\frac{L_v^2(1,\chi_{E_v/F_v})}{L_{E_v}(\frac{1}{2}, BC(\mu_v)\otimes \gamma_v)} \cdot  \lim_{\Re(s)\to 0+} \frac{\zeta_v(2s)}{L_{E_v}(s, BC(\pi_{2,v})\otimes \gamma_v)}\cdot I_v(s,\varphi_{1,v}, \varphi_{2,v},f_{\pi_{2,v}},f_{\mu_v}) \text{   where  }$$ \beqnan&&I_v(s,\varphi_{1,v}, \varphi_{2,v},f_{\pi_{2,v}},f_{\mu_v}):=\\&&\int_{U(V)_v}\left(\int_{U(L)_v} \mathcal{B}_{\omega_{W,L}}^{\varphi_{2,v}}(l_v)\cdot \mathcal{B}_{\mu_v}^{ f_{\mu_v}}(l_v)dl_v \right) \cdot \mathcal{B}_{\omega_{W,V\oplus L}}^{\varphi_{1,v}}(g_v)\cdot \mathcal{B}_{\pi_{2,v}}^{ f_{\pi_{2,v}}}(g_v)\cdot \Delta_2(g_v)^{s}dg_v.\eeqnan

Set $J(s,g_v,l_v,\varphi_{1,v}, \varphi_{2,v},f_{\pi_v},f_{\mu_v}):=\mathcal{B}_{\omega_{W,L}}^{\varphi_{2,v}}(l_v)\cdot \mathcal{B}_{\mu_v}^{ f_{\mu_v}}(l_v) \cdot \mathcal{B}_{\omega_{W,V\oplus L}}^{\varphi_{1,v}}(g_v)\cdot \mathcal{B}_{\pi_{2,v}}^{ f_{\pi_{2,v}}}(g_v)\cdot \Delta_2(g_v)^{s}.$
Then we can write $I_v$ as a double integral, $$I_v(s,\varphi_{1,v}, \varphi_{2,v},f_{\pi_{2,v}},f_{\mu_v})=\int_{U(V)_v \times U(L)_v} J(s,g_v,l_v,\varphi_{1,v}, \varphi_{2,v},f_{\pi_{2,v}},f_{\mu_v}) dg_v dl_v.$$ Since $\pi_2$ is tempered, by Lemma 7.2 in \cite{Ya}, $Z_v(s,f_{\pi_{2,v}}, f_{\pi_{2,v}},[\rho(\varphi_{1,v}\otimes\bar{\varphi}_{1,v})])$ absolutely converge for $\Re(s)>-\frac{1}{2}$ and so $Z_v(0,f_{\mu_v}, f_{\mu_v},[\rho(\varphi_{2,v}\otimes\bar{\varphi}_{2,v})])$ does. For $\Re(s)>0$, $I_v(s)$ is just the product of $Z_v(s,f_{\pi_{2,v}}, f_{\pi_{2,v}},[\rho(\varphi_{1,v}\otimes\bar{\varphi}_{1,v})])$ and $Z_v(0,f_{\mu_v}, f_{\mu_v},[\rho(\varphi_{2,v}\otimes\bar{\varphi}_{2,v})])$, the above doubled integral for $I_v(s)$ absolutely converges for $\Re(s)>0$.\\



$Step\ 4.$ By making a change of variables $g_v\to g_vl_v$, \beqnan&&I_v(s,\varphi_{1,v}, \varphi_{2,v},f_{\pi_{2,v}},f_{\mu_v})=\int_{U(V)_v \times U(L)_v} J(s,g_vl_v,l_v,\varphi_{1,v}, \varphi_{2,v},f_{\pi_{2,v}},f_{\mu_v}) dg_v dl_v\\&&=\int_{U(V)_v \times U(L)_v} \mathcal{B}_{\omega_{ W,V}}^{\varphi_{1,v}}(g_vl_v) \cdot \mathcal{B}_{\omega_{W,L}}^{\varphi_{2,v}}(l_v)\cdot \mathcal{B}_{\pi_{2,v}}^{ f_{\pi_{2,v}}}(g_vl_v)\cdot  \mathcal{B}_{\mu_v}^{ f_{\mu_v}}(l_v)\cdot \Delta_2(g_vl_v)^{s}dg_v dl_v\\&&=\int_{U(V)_v \times U(L)_v} \mathcal{B}_{\omega_{W,V\oplus L}}^{\varphi_v}(g_v,l_v) \cdot \mathcal{B}_{\pi_{2,v}}^{ f_{\pi_{2,v}}}(g_v)\cdot \omega_{\pi_{2,v}}(l_v) \cdot \mathcal{B}_{\mu_v}^{ f_{\mu_v}}(l_v)\cdot \Delta_2(g_vl_v)^{s}dg_v dl_v\\&&=\int_{U(V)_v \times U(L)_v} \mathcal{B}_{\omega_{W,V\oplus L}}^{\varphi_v}(g_v,l_v) \cdot \mathcal{B}_{\pi_{2,v}}^{ f_{\pi_{2,v}}}(g_v)\cdot \omega_{\sigma_v}(l_v) \cdot \mathcal{B}_{\mu_v}^{ f_{\mu_v}}(1_v)\cdot \Delta_2(g_vl_v)^{s}dg_v dl_v\\&&=\int_{U(V)_v \times U(L)_v} \mathcal{B}_{\omega_{W,V\oplus L}}^{\varphi_v}(g_v,l_v) \cdot \mathcal{B}_{\pi_{2,v}}^{ f_{\pi_{2,v}}}(g_v)\cdot \mathcal{B}_{\sigma_v}^{ f_{\sigma_v}}(l_v)\cdot \Delta_2(g_vl_v)^{s}dg_v dl_v\eeqnan
(The last equality follows from $\mathcal{B}_{\sigma_v}(f_{\sigma_v},f_{\sigma_v})=\mathcal{B}_{\mu_v}(f_{\mu_v},f_{\mu_v}$)). \\

$Step\ 5.$ By the lemma \ref{lem2} in the next section, we see that $$\lim_{\Re(s)\to 0+} \frac{\zeta_v(2s)}{L_{E_v}(s, BC(\pi_{2,v})\otimes \gamma_v)}\cdot \int_{U(V)_v \times U(L)_v} \mathcal{B}_{\omega_{W,V\oplus L}}^{\varphi_v}(g_v,l_v) \cdot \mathcal{B}_{\pi_{2,v}}^{ f_{\pi_{2,v}}}(g_v)\cdot \mathcal{B}_{\sigma_v}^{f_{\sigma_v}}(l_v)\cdot \Delta_2(g_vl_v)^{s}dg_v dl_v=$$ 

$$\lim_{\Re(s)\to 0+} \frac{\zeta_v(2s)}{L_{E_v}(s, BC(\pi_{2,v})\otimes \gamma_v)}\cdot\int_{U(V)_v \times U(L)_v} \mathcal{B}_{\omega_{W,V\oplus L}}^{\varphi_v}(g_v,l_v) \cdot \mathcal{B}_{\pi_{2,v}}^{ f_{\pi_{2,v}}}(g_v)\cdot \mathcal{B}_{\sigma_v}^{ f_{\sigma_v}}(l_v)\cdot \Delta_2(g_v)^{s}dg_v dl_v=$$  $$\lim_{\Re(s)\to 0+}\frac{\zeta_v(2s)}{L_{E_v}(s, BC(\pi_{2,v})\otimes \gamma_v)} \cdot \int_{U(V)_v}Z_v(1, {f}_{\sigma_v}, {f}_{\sigma_v}, [\rho(g_v \cdot \varphi_{v}\otimes\bar{\varphi}_{v})])\cdot \mathcal{B}_{\pi_{2,v}}^{ f_{\pi_{2,v}}}(g_v)\cdot \Delta_2(g_v)^{s} dg_v.$$ 

\noindent We normalize $Z_v(1, {f}_{\sigma_v}, {f}_{\sigma_v}, [\rho(\varphi_{v}\otimes\bar{\varphi}_{v})])$ by $$Z_v^{\sharp}(1, {f}_{\sigma_v}, {f}_{\sigma_v}, [\rho(\varphi_{v}\otimes\bar{\varphi}_{v})]):=\frac{L_{v}(3,\chi_{E_v/F_v})}{L_{E_v}(3/2, BC(\sigma_v)\otimes \gamma_v^3)}\cdot Z_v(1, {f}_{\sigma_v}, {f}_{\sigma_v}, [\rho(\varphi_{v}\otimes\bar{\varphi}_{v})]).$$

We define the local inner product $\mathcal{B}_{\theta(\bar{\sigma}_v)}$ on $\theta_v(\bar{\sigma}_v)$ as follows:
$$\mathcal{B}_{\theta(\bar{\sigma}_v)}(\theta_v(\bar{f}_{\sigma_v},\varphi_{v}),\theta_v(\bar{f}_{\sigma_{v}},\varphi_{v})):=
\begin{cases}\frac{L_{E}(3/2, BC(\sigma)\otimes \gamma^3)}{L(3,\chi_{E/F})} \cdot Z_v^{\sharp}(1, {f}_{\sigma_v}, {f}_{\sigma_v}, [\rho(\varphi_{v}\otimes\bar{\varphi}_{v})]) \mbox{   for some place $v$} \\\\ Z_v^{\sharp}(1, {f}_{\sigma_v}, {f}_{\sigma_v}, [\rho(\varphi_{v}\otimes\bar{\varphi}_{v})]) \mbox{   for the remaining places}
\end{cases}$$ 

Then we see that $$\mathcal{B}_{\Theta(\bar{\sigma})}(\theta(\bar{f}_{\sigma},\varphi),\theta(\bar{f}_{\sigma},\varphi))=\prod_{v} \mathcal{B}_{\theta(\bar{\sigma}_v)}(\theta_v(\bar{f}_{\sigma_v},\varphi_{v}),\theta_v(\bar{f}_{\sigma_{v}},\varphi_{v}))$$
and $\mathcal{B}_{\theta(\bar{\sigma}_v)}(\theta_v(\bar{f}_{\sigma_v},\varphi_{v}),\theta_v(\bar{f}_{\sigma_{v}},\varphi_{v}))=1$ for unramified data $(f_{\sigma_v},\varphi_{v})$. \\(Note that the $'$small$'$ local theta-lift is the maximal semisimple quotient of the $'$big$'$ theta-lift,  and so we should check whether these pairings are well-defined. But since we are assuming $\Theta(\bar{\sigma})$ is cuspidal, it is semisimple and so $\mathcal{B}_{\Theta(\bar{\sigma})}(\theta(\bar{f}_{\sigma},\varphi),\theta(\bar{f}_{\sigma},\varphi))$ factors as a map $\sigma_v \otimes \bar{\sigma_v} \otimes \varpi_{\omega_{W,V\oplus L}} \otimes {\bar{\varpi}_{\omega_{W,V\oplus L}}} \to $ $\Theta(\bar{\sigma}) \otimes \overline{\Theta(\bar{\sigma})}$. Thus theorem (\ref{thm3.4}) shows that $\mathcal{B}_{\Theta(\bar{\sigma}_v)}$ descends to $\mathcal{B}_{\theta(\bar{\sigma}_v)}$.)\\

$Step\ 6.$ With the things we developed so far, we see that $$\mathcal{P}(f_{\Theta(\bar{\sigma})},f_{\pi_2})=\frac{1}{4}\mathcal{P}'(f_{\Theta(\bar\sigma)}, f_{\pi_2}, f_{\mu})=\frac{1}{4}\mathcal{P}'(\theta(\bar{f_{\pi_2}},\varphi_1)), \theta(\bar{f_{\mu}},\varphi_2)), f_{\sigma})$$
$$=-\frac{1}{2^3}\cdot \frac{L_E(\frac{1}{2}, BC(\mu)\otimes \gamma)}{L(1,\chi_{E/F})}\cdot \frac{Res_{s=0}(L_E(s, BC(\pi_2)\otimes \gamma))}{L(1,\chi_{E/F})} \cdot \prod_{v} Z_v^{\sharp}(f_{\mu_v},f_{\pi_{2,v}},\varphi_{1,v},\varphi_{2,v})$$
$$=-\frac{1}{2^3} \cdot \frac{L_E(\frac{1}{2}, BC(\mu)\otimes \gamma)}{L(1,\chi_{E/F})}\cdot \frac{Res_{s=0}(L_E(s, BC(\pi_2)\otimes \gamma))}{L(1,\chi_{E/F})} \frac{L(3,\chi_{E/F})}{L_{E}(3/2, BC(\sigma)\otimes \gamma^3)} \cdot \prod_{v} \mathcal{P}_v(\theta_v(\bar{f}_{\sigma_v},\varphi_{v}),f_{\pi_2,v})$$

\noindent This proves the theorem.

\begin{remark} In the course of the proof, we see that our local period $\mathcal{P}_v^{\sharp}(\theta_v(\bar{f}_{\sigma_v},\varphi_{v}),f_{\pi_2,v})$ is just the unfolding expression of $Z_v^\sharp(0, {f}_{\mu_v}, {f}_{\mu_v}, \Phi_{0, v})\cdot Z_{v,s=-\frac{1}{2}}^\sharp(s, {f}_{\pi_{2,v}}, {f}_{\pi_{2,v}}, \Phi_{-\frac{1}{2}, v})$. By Proposition 11.6 in \cite{Gan}, the non-vanishing of these two local zeta integral functions $Z_{v,s=-\frac{1}{2}}^\sharp$ and $Z_v^\sharp$ is equivalent to the non-vanishing of the local theta lifts $\Theta_{W_{1,v},V_{2,v}}(\overline{\pi_{2,v}})$ and $\Theta_{W_{1,v},L_{1,v}}(\overline{\mu_v})$ respectively. Since we assumed that theta lift $\Theta_{W_{1},V_{2}}(\overline{\pi_{2}})$ is non-zero, $\Theta_{W_{1,v},V_{2,v}}(\overline{\pi_{2,v}})$ is non-zero for all $v$ and so is  $Z_{v,s=-\frac{1}{2}}^\sharp$. Thus the non-vanishing of $\mathcal{P}_v^{\sharp}$ is equivalent to $\Theta_{W_{1,v},L_{1,v}}(\overline{\mu_v})=\Theta_{W_{1,v},L_{1,v}}(\overline{\sigma_v})$ is non-zero. (Note that $w_{\pi_{2}}=\mathbb{I}$.) Based upon this observation, we proved that $\mathcal{P}_v^{\sharp}\ne0$ is equivalent to $\text{Hom}_{U(V_2)(F_v)}(\pi_{3,v},\pi_{2,v})\ne0$ in \cite{Haan} and it is the analog of the local Ichino-Ikeda conejctures for non-tempered case.
\end{remark}

\section{Proof of Lemma \ref{lem2}}\label{last}

\noindent In this section, we prove the lemma upon which we developed $Step\ 5$ in the proof of \ref{thm}. We retain the same notations as in the previous section and since everything occurs in local case, we suppress $v$ from the notation. We remind the reader that $\pi_2$ is given by the theta lift of the trivial character $\mathbb{I}$ of $U(1)$.
\begin{lemma}Let $t$ be the order of $\frac{\zeta(2s)}{L_{E}(s, BC(\pi_{2})\otimes \gamma)}$ at $s=0$. Then,

\begin{equation}\label{lem2}\lim_{\Re(s)\to 0+}s^t \cdot \int_{U(V) \times U(L)} \mathcal{B}_{\omega_{W,V\oplus L}}^{\varphi}(g,l) \cdot \mathcal{B}_{\pi_2}^{ f_{\pi_2}}(g)\cdot \mathcal{B}_{\sigma}^{ f_{\sigma}}(l)\cdot (\Delta_2(gl)^{s}-\Delta_2(g)^{s})dg dl=0\end{equation}
\end{lemma}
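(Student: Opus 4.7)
The plan is to extract the factor of $s$ automatically present in $\Delta_2(gl)^s - \Delta_2(g)^s$ near $s=0$, and then to bound the remainder by a weighted variant of the Piatetski--Shapiro--Rallis zeta integral whose pole order at $s=0$ is at most $t$ by construction.

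First I would establish a uniform-in-$g$ bound
\begin{equation*}
\bigl|\log(\Delta_2(gl)/\Delta_2(g))\bigr| \leq C(l) \qquad (g \in U(V)),
\end{equation*}
where $C(l)$ is a function of $l \in U(L)$ of at most polynomial growth. At non-split places $U(L)$ is compact and this is trivial. At a split non-archimedean place $v$, identifying $U(L)$ with the center of $U(V) \simeq GL_2(F_v)$ and writing $l \sim \varpi^c$, formula (\ref{height}) gives $\Delta_2(g) = q^{-|a_1|-|a_2|}$ and $\Delta_2(gl) = q^{-|a_1+c|-|a_2+c|}$, hence $C(l) \leq 2|c|\log q$ by the triangle inequality; the split archimedean case is analogous via (\ref{height2}). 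Combining this with the elementary estimate $|e^z - 1| \leq |z|\,e^{|z|}$ yields the pointwise inequality
\begin{equation*}
\bigl|\Delta_2(gl)^s - \Delta_2(g)^s\bigr| \leq |s|\,C(l)\,e^{|s|C(l)}\,\Delta_2(g)^{\Re s}.
\end{equation*}

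Second, this reduces the claim to showing that the weighted integral
\begin{equation*}
J(s) := \int_{U(V) \times U(L)} \bigl|\mathcal{B}_{\omega_{W,V\oplus L}}^{\varphi}(g,l)\,\mathcal{B}_{\pi_2}^{f_{\pi_2}}(g)\,\mathcal{B}_{\sigma}^{f_{\sigma}}(l)\bigr|\,C(l)\,e^{|s|C(l)}\,\Delta_2(g)^{\Re s}\,dg\,dl
\end{equation*}
has a pole of order at most $t$ at $s=0$, so that $|s|^{t+1}\,J(s) = O(|s|) \to 0$ as $s \to 0+$. For this I would invoke the Schwartz--Bruhat decay of $\varphi$: in the split case, $|\mathcal{B}_{\omega_{W,V\oplus L}}^{\varphi}(g,l)|$ decays algebraically both as $|l|_v \to 0$ and as $|l|_v \to \infty$, which dominates the polynomial weight $C(l)$ and the (uniformly bounded, for small $|s|$) exponential factor $e^{|s|C(l)}$. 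Consequently the $l$-integration is absolutely convergent uniformly in $s$, and the remaining $g$-integral inherits the analytic singularity structure of the normalized Piatetski--Shapiro--Rallis zeta integral of $\pi_2$ from (\ref{normal1}), whose pole order at $s=0$ is exactly $t$ by the definition of $t$.

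The main obstacle is precisely this final pole-order comparison: verifying rigorously that the polynomial weight $C(l)$ does not worsen the pole order of $J(s)$ beyond $t$. This requires a careful quantitative treatment of the Weil matrix coefficient's decay along the central direction, handled separately at archimedean and non-archimedean split places.
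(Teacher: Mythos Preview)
Your reduction to the split case and the extraction of a single factor of $s$ via the bound $|\Delta_2(gl)^s-\Delta_2(g)^s|\le |s|\,C(l)\,e^{|s|C(l)}\Delta_2(g)^{\Re s}$ are both correct, and you are right that the polynomial weight $C(l)$ is harmless against the decay $q^{-|l|/2}$ of the Weil matrix coefficient in the central direction. The gap is elsewhere: your weighted integral $J(s)$ is an integral of \emph{absolute values}, and it does \emph{not} inherit the pole order of the signed Piatetski--Shapiro--Rallis zeta integral. The latter has pole order at most $t$ at $s=0$ precisely because of oscillatory cancellation coming from the phase $c^{2n+m}$ (with $c=\gamma_1^2(\varpi)$) in the Weil matrix coefficient; taking absolute values destroys this. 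Concretely, write $g\sim\operatorname{diag}(\varpi^{n+m},\varpi^n)$ in Cartan coordinates: in the range $-m<n<0$ one has $|n|+|n+m|=m$, the Weil coefficient has size $q^{-m/2}$, the tempered bound gives $|d_m|\lesssim q^{-m/2}$, and the measure contributes $q^m$, so the absolute-value integrand is $\sim q^{-sm}$ independently of $n$. Summing over the $\sim m$ values of $n$ in this range and then over $m$ gives $\sum_m m\,q^{-sm}$, a genuine double pole for $J(s)$. When $\gamma_1^2(\varpi)\neq 1$ the paper computes $t=1$, so your bound yields only $|s\cdot I(s)|\le |s|^2 J(s)\to\text{const}$, not $0$. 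In the signed integral the inner sum $\sum_{-m<n<0}c^{2n}$ is bounded (geometric series with ratio $c^2\ne 1$), which is exactly what brings the pole order down to $1$.

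The paper's proof avoids this by never passing to absolute values: it performs an explicit case decomposition of the triple sum over $(n,m,l)$ and evaluates each piece as a rational function of $q^{-s}$, keeping the phases $c^{2n+m}$ and $\alpha^l$ throughout so that the required cancellation is visible. Your obstacle is thus not the weight $C(l)$ but the incompatibility between the crude absolute-value bound and the phase-dependent value of $t$; to make your strategy work you would need to retain the signed integrand in the $g$-variable after extracting the factor of $s$, which essentially forces the same region-by-region analysis the paper carries out.
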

\begin{proof}When $E$ is quadratic field extension of $F$, $U(L)$ is the centralizer of $U(V)$ and compact and so it is included in every maximal compact subgroup of $U(V)$. Then $\Delta_2(gl)^{s}-\Delta_2(g)^{s}=0$ and so the lemma is immediate in this case. So we assume $E=F \times F$ and by our hypothesis, all archimedean places do not split, and so we consider only $p$-adic case.

\noindent Since $E=F \times F$, $U(n)\simeq GL_n(F)$ and by Cartan decomposition, $GL_1(F)=\bigcup_{l\in \Z} \varpi^lK_{1}$, $GL_2(F)=\bigcup_{n\in \Z, m\in \N}K_{2} \begin{pmatrix} \varpi^{n+m} & \\  & \varpi^{n}\end{pmatrix} K_{2}$. (here, $\mathcal{O}$ is the ring of integer of $F$ and $\varpi$ is a uniformizer of $\mathcal{O}$ and $K_i=GL_i(\mathcal{O})$.) \\Since the theta lift preserves the central character, $\omega_{\pi_2}(\varpi)=1$ and let $\alpha=\sigma(\varpi)$. For $i=1,2$ and diagonal matrix $m\in GL_i(F),$ let $\mu_i(m):=\frac{Vol(K_imK_i)}{Vol(K_i)^2}$. Since $GL_1(F)$ is abelian, $\mu_1(m)=1$ and by the Lemma 2.1 in (\cite{Qiu2}), $\mu_2(\operatorname{diag}(a,b))=C\cdot |\frac{b}{a}|$ for some constant $C\in \mathbb{R}_{>0}.$

\noindent Then the measure decomposition formula turns \ref{lem2} to show

$$\lim_{\Re(s)\to 0+}s^t \cdot \sum_{n,l\in \Z , m\ge0}\alpha^l\cdot|\varpi|^{-m}\cdot  (|\varpi|^{s(|n+m+l|+|n+l|)}-|\varpi|^{s(|n+m|+|n|)}) \cdot I(s,\varphi,f_{\pi_2},m,n,l)=0$$ where $I(s,\varphi,f_{\pi_2},m,n,l)=$

$$\int_{K_1 \times K_2 \times K_2} \mathcal{B}_{\omega_{W,V\oplus L}}^{\varphi}(k_2\operatorname{diag}(\varpi^{n+m},\varpi^{n})k'_2,\varpi^lk_1) \cdot \mathcal{B}_{\pi_2}^{ f_{\pi_2}}(k_2\operatorname{diag}(\varpi^{m},1)k'_2)dk_1 dk_2 dk'_2.$$ Since $\varphi$ and $f_{\pi_2}$ are $K \times K$-finite functions, we are sufficient to show \beqnan \lim_{\Re(s)\to 0+} s^t \cdot \Big(\sum_{n,l\in \Z , m\ge0}\alpha^l\cdot|\varpi|^{-m}\cdot  (|\varpi|^{s(|n+m+l|+|n+l|)}-|\varpi|^{s(|n+m|+|n|)}) \cdot c_{n,m,l}\cdot d_{m}\Big)=0 \eeqnan  where $c_{n,m,l}=\mathcal{B}_{\omega_{W,V\oplus L}}^{\varphi}(\operatorname{diag}(\varpi^{n+m},\varpi^{n}),\varpi^l)$ and $d_{m}=\mathcal{B}_{\pi_2}^{ f_{\pi_2}}(\operatorname{diag}(\varpi^{m},1))$. \\Now we invoke the asymptotic fomulas of $c_{n,m,l}$ and $d_m$.
Recall (\ref{weil}) in Section 2.2 and write $c=\gamma_1^2(\varpi)$. (Note that $|c|=1$.) Since $\varphi$ is locally constant and has compact support, there is $l_1\in \N$ such that for $X,Y \in F^3$, if $|X-Y|\le |\varpi|^{l_1}\cdot \operatorname{Sup}\{|X| \mid X\in supp(\varphi) \subset F^3\}$, then $\varphi(X)=\varphi(Y)$. Thus

 $$c_{n,m,l}=\begin{cases}c^{2n+m+l}\cdot|\varpi|^{n+\frac{m+l}{2}}\cdot \int_{F^3} \varphi(\varpi^{n+m}x_1,\varpi^{n}x_2,0) \cdot \varphi(x_1,x_2,x_3) dX, \text{ if } l\ge l_1 \\c^{2n+m+l} \cdot|\varpi|^{n+\frac{m-l}{2}}\cdot \int_{F^3} \varphi(\varpi^{n+m}x_1,\varpi^{n}x_2,x_3) \cdot \varphi(x_1,x_2,0) dX, \text{ if } l\le -l_1.\end{cases}$$
Write $$a_{n,m}=\int_{F^3} \varphi(\varpi^{n+m}x_1,\varpi^{n}x_2,0) \cdot \varphi(x_1,x_2,x_3) dX, \ b_{n,m}=\int_{F^3} \varphi(\varpi^{n+m}x_1,\varpi^{n}x_2,x_3) \cdot \varphi(x_1,x_2,0) dX.$$ \\Then $a_{n,m}=\begin{cases} a^1_{n,m}, &\text{ if } n\ge
l_1 \\|\varpi|^{-n}\cdot a^2_{n,m}, &\text{ if } n\le -l_1, \end{cases}$ where $$a^1_{n,m}=\int_{F^3} \varphi(\varpi^{n+m}x_1,0,0) \cdot \varphi(x_1,x_2,x_3) dX, \ a^2_{n,m}=\int_{F^3} \varphi(\varpi^{n+m}x_1,x_2,0) \cdot \varphi(x_1,0,x_3) dX$$ and $b_{n,m}=\begin{cases}b^1_{n,m}, &\text{ if } n\ge
l_1 \\|\varpi|^{-n}\cdot b^2_{n,m}, &\text{ if } n\le -l_1, \end{cases}$ where 
 $$b^1_{n,m}=\int_{F^3} \varphi(\varpi^{n+m}x_1,0,x_3) \cdot \varphi(x_1,x_2,0) dX,\ b^2_{n,m}=\int_{F^3} \varphi(\varpi^{n+m}x_1,x_2,x_3) \cdot \varphi(x_1,0,0) dX.$$ Again $$a^i_{n,m}=\begin{cases} k^i_1 &\text{ if } n+m\ge l_1\\ |\varpi|^{-(n+m)}\cdot k^i_2 &\text{ if } n+m\le -l_1 \end{cases}$$ and $$b^i_{n,m}=\begin{cases}k^i_3 &\text{ if } n+m\ge l_1\\ |\varpi|^{-(n+m)}\cdot k^i_4 &\text{ if } n+m\le -l_1 \end{cases}$$ for some constants $\{k^i_1,k^i_2,k^i_3,k^i_4 \}_{i=1,2}.$\\

\noindent Note that in codimension $0,1$ case, the theta lift sends a tempered representation to a tempered one. Thus we know that $\pi_2$ is tempered and by [Prop.8.1, \cite{Gan1}], we see that it is the irreducible unitary induced representation $B(\gamma_1^2,\gamma_1^{-2})$ of $GL(2)(F)$. (here, since $\gamma=(\gamma_1,\gamma_1^{-1})$, if we regard $\gamma$ as a character of $F^{\times}$ using the isomorphism of $U(1)$ and $GL(1)$, $\gamma(x)=\gamma_1^2(x)$.) Then by (\cite{Qiu2}, Lemma 3.9), if we take $l_1$ large enough, we assume that for $m\ge l_1$, $d_m=|\varpi|^{\frac{m}{2}}\cdot(c_1\cdot c^m + c_2 \cdot c^{-m})$  where $c_1,c_2$ are constants.

If $\pi$ is an unramified representation of $U(W_n)$ and $\theta(\pi)$ is the theta lift of $\pi$ to $U(V_{n+1})$, then $BC(\theta(\pi))\simeq BC(\pi)\gamma^{-1} \boxplus \gamma^{n}$ by (8.1.2) in \cite{Xue}. Recall that $GU_{2,0}(\A_F) \simeq (D^{\times} \times E^{\times})/\Delta F^{\times}$ where $D$ is the quaternion division algebra over $F$ and $GU_{1,1}(\A_F)\simeq (GL_2(F) \times E^{\times})/\Delta F^{\times}$. Since $GL_2(F)$ and $D^{\times}$ have the strong multiplicity one theorem and global theta lift is the product of local theta lifts, the unramified computations of the local theta lifts completely determine the global theta lift from $U(1)$ to $U(2)$ not at the level of individual represenations but of $L$-parameters. Thus since $\pi_2$ is the theta lift of the trivial representation, we have the $L$-parameter relation $BC(\pi_2)=BC(\mathbb{I})\gamma^{-1} \boxplus \gamma$ for all places and so $L_{E}(s,BC(\pi_2)\otimes \gamma)=(\frac{1}{1-q^{-s}})^2 \cdot \frac{1}{1-\gamma^2_1(\varpi)q^{-s}}\cdot \frac{1}{1-\gamma_1^{-2}(\varpi)q^{-s}}$. (Recall $\gamma=(\gamma_1,\gamma_1^{-1}$) for some unitary character $\gamma_1$ of $F$.) Thus if $\gamma_1^2(\varpi)=1$, $L_{E}(s,BC(\pi_2)\otimes \gamma)$ has a quadruple pole at $s=0$ and if $\gamma_1^2(\varpi)\ne 1$, then it has double pole at $s=0$.) So in any cases, we have $t\ge1$.\\

\noindent Now, we introduce two notation that we will use in this argument :\\

\noindent $\bullet$ If two meromorphic functions $f_1,f_2$ differ by a constant multiplication, we write $f_1 \approx f_2$.\\

\noindent $\bullet$ For two meromorphic functions $f_1,f_2$ and $m \in \N$, if $\lim_{\Re(s) \to 0+} s^m \cdot (f_1(s)-f_2(s))=0$, we write $f_1 \overset{m}{\sim} f_2$ and if $f_1 \overset{0}{\sim} f_2$, we simply write $f_1 \sim f_2$. \\

\noindent Since the integral in the Lemma absolutely converges on $\Re(s)>0$, to prove it, it suffices to show that each component of the integral \begin{equation} \label{1} \sum_{n\in \Z , m\ge0}c^{2n+m}|\varpi|^{n-\frac{m}{2}}d_ma_{n,m}\cdot \big(\sum_{l\ge l_1}c^l\alpha^l|\varpi|^{\frac{l}{2}}  (|\varpi|^{s(|n+m+l|+|n+l|)}-|\varpi|^{s(|n+m|+|n|)}\big)\end{equation} \\  \begin{equation} \label{2}  \sum_{n\in \Z , m\ge0}|\varpi|^{-m}d_m\cdot \big(\sum_{-l_1<l<l_1}\alpha^l (|\varpi|^{s(|n+m+l|+|n+l|)}-|\varpi|^{s(|n+m|+|n|)}) \cdot c_{n,m,l}\big)\end{equation}\\ \begin{equation} \label{3} \sum_{n\in \Z , m\ge0}c^{2n+m}|\varpi|^{n-\frac{m}{2}}d_ma_{n,m}\cdot \big(\sum_{l<- l_1}c^l\alpha^l|\varpi|^{\frac{l}{2}}  (|\varpi|^{s(|n+m+l|+|n+l|)}-|\varpi|^{s(|n+m|+|n|)}\big)  \end{equation} \\ are all $\overset{1}{\sim} 0.$\\

\noindent We will first show (\ref{1})$\overset{1}{\sim} 0.$ To do this, we write $$r_{l,m,n}(s)=c^l\alpha^l|\varpi|^{\frac{l}{2}}  (|\varpi|^{s(|n+m+l|+|n+l|)}-|\varpi|^{s(|n+m|+|n|)}\big)$$ and decompose (\ref{1}) into three component.$$\sum_{n\in \Z , m\ge0}c^{2n+m}|\varpi|^{n-\frac{m}{2}}d_ma_{n,m}\cdot \Big(\sum_{l\ge l_1, l < -(n+m)}r_{l,m,n}(s)\Big)$$ $$+\sum_{n\in \Z , m\ge0}c^{2n+m}|\varpi|^{n-\frac{m}{2}}d_ma_{n,m}\cdot \Big(\sum_{l\ge l_1, -(n+m)\le l < -n} r_{l,m,n}(s)\Big)$$ $$+\sum_{n\in \Z , m\ge0}c^{2n+m}|\varpi|^{n-\frac{m}{2}}d_ma_{n,m}\cdot \Big(\sum_{l\ge l_1, l\ge -n} r_{l,m,n}(s)\Big)$$ and show each component is $\overset{1}{\sim} 0.$

\noindent For fixed $m\in \N$ and small $\Re(s)>0$, $$ \sum_{n\in \Z}c^{2n+m}|\varpi|^{n-\frac{m}{2}}d_ma_{n,m}\cdot \sum_{l\ge l_1, l < -(n+m)}r_{l,m,n}(s)=$$ $$ \sum_{n\le -(m+l_1+1)}c^{2n+m}\cdot |\varpi|^{n(1-2s)-m(\frac{1}{2}+s)}d_ma_{n,m}\cdot \Big(f_1(s)-f^{m,n}_2(s)\Big) $$ where $$f_1(s)=\frac{(c\alpha|\varpi|^{\frac{1}{2}-2s})^{l_1}}{1-c\alpha|\varpi|^{\frac{1}{2}-2s}}-\frac{(c\alpha|\varpi|^{\frac{1}{2}})^{l_1}}{1-c\alpha|\varpi|^{\frac{1}{2}}} \text{ and }$$ $$ f^{m,n}_2(s)=\frac{(c\alpha|\varpi|^{\frac{1}{2}-2s})^{-(n+m)}}{1-c\alpha|\varpi|^{\frac{1}{2}-2s}}-\frac{(c\alpha|\varpi|^{\frac{1}{2}})^{-(n+m)}}{1-c\alpha|\varpi|^{\frac{1}{2}}}.$$
Note that $f_1, f^{m,n}_2 \sim 0.$

\noindent Since $$\sum_{m\ge0}\sum_{n\le -(m+l_1+1)}c^{2n+m}\cdot|\varpi|^{n(1-2s)-m(\frac{1}{2}+s)}d_ma_{n,m}$$ $$\approx \sum_{m\ge0}d_m(c|\varpi|^{-s-\frac{3}{2}})^m\Big(\sum_{n\le -(m+l_1+1)}(c^2 |\varpi|^{-1-2s})^n\Big)$$ $$\approx (c^{-2}|\varpi|^{1+2s})^{l_1+1} \sum_{m\ge0} \frac{d_m(c^{-1}|\varpi|^{s-\frac{1}{2}})^m}{1-c^{-2}|\varpi|^{1+2s}}$$ $$=\frac{(c^{-2}|\varpi|^{1+2s})^{l_1+1}}{1-c^{-2}|\varpi|^{1+2s}}\cdot  \Big(\big(\sum_{m=0}^{l_1-1} d_m(c^{-1}|\varpi|^{s-\frac{1}{2}})^m\big)+ c_1 \cdot \frac{|\varpi|^{sl_1}}{1-|\varpi|^s}+ c_2 \cdot \frac{(c^{-2}|\varpi|^s)^{l_1}}{1-c^{-2}|\varpi|^s}\Big)$$ and so $( \sum_{m\ge0}\sum_{n\le -(m+l_1+1)}c^{2n+m}\cdot|\varpi|^{n(1-2s)-m(\frac{1}{2}+s)}d_ma_{n,m})\cdot f_1(s) \overset{1}{\sim} 0$. \\

\noindent Furthermore, $$ \sum_{m\ge0}\sum_{n\le -(m+l_1+1)}c^{2n+m}\cdot |\varpi|^{n(1-2s)-m(\frac{1}{2}+s)}d_ma_{n,m}\cdot \Big(\frac{(c\alpha|\varpi|^{\frac{1}{2}-2s})^{-(n+m)}}{1-c\alpha|\varpi|^{\frac{1}{2}-2s}}-\frac{(c\alpha|\varpi|^{\frac{1}{2}})^{-(n+m)}}{1-c\alpha|\varpi|^{\frac{1}{2}}}\Big)\approx$$ $$\sum_{m\ge0}d_m \alpha^{-m}\Big( \sum_{n \le -(m+l_1+1)}|\varpi|^{(s-2)m}\cdot (c|\varpi|^{-\frac{3}{2}}\alpha^{-1})^n + \ |\varpi|^{(-s-2)m}\cdot(c|\varpi|^{-\frac{3}{2}-2s}\alpha^{-1})^n \Big)=$$ $$\big(\frac{(c^{-1}|\varpi|^{\frac{3}{2}}\alpha)^{l_1+1}}{1-c^{-1}|\varpi|^{\frac{3}{2}}\alpha}-\frac{(c^{-1}|\varpi|^{\frac{3}{2}}\alpha)^{l_1+1}}{1-c^{-1}|\varpi|^{\frac{3}{2}+2s}\alpha}\big)\cdot \sum_{m\ge0}d_m(c^{-1}|\varpi|^{(s-\frac{1}{2})})^m\approx$$ $$\big(\frac{(c^{-1}|\varpi|^{\frac{3}{2}}\alpha)^{l_1+1}}{1-c^{-1}|\varpi|^{\frac{3}{2}}\alpha}-\frac{(c^{-1}|\varpi|^{\frac{3}{2}}\alpha)^{l_1+1}}{1-c^{-1}|\varpi|^{\frac{3}{2}+2s}\alpha}\big)\cdot \Big( c_1 \cdot \frac{|\varpi|^{sl_1}}{1-|\varpi|^s}+ c_2 \cdot \frac{(c^{-2}|\varpi|^s)^{l_1}}{1-c^{-2}|\varpi|^s}\Big) \overset{1}{\sim} 0.$$ Thus we see that $$\sum_{n\in \Z , m\ge0}c^{2n+m}|\varpi|^{n-\frac{m}{2}}d_ma_{n,m}\cdot \big(\sum_{l\ge l_1, l < -(n+m)}r_{l,m,n}(s) \big)\overset{1}{\sim} 0.$$\\

\noindent Next we will show $$\sum_{m \in \N}c^md_m |\varpi|^{-\frac{m}{2}}\sum_{n\in \Z}c^{2n}|\varpi|^n a_{n,m}\cdot \sum_{l\ge l_1, -(n+m)\le l < -n} (c\alpha|\varpi|^{\frac{1}{2}})^l \cdot  (|\varpi|^{sm}-|\varpi|^{s(|n+m|-n)})\overset{1}{\sim} 0.$$ Let $$p_{n,m}(s)=c^{2n}|\varpi|^n a_{n,m}\cdot \sum_{l\ge l_1, -(n+m)\le l < -n} (c\alpha|\varpi|^{\frac{1}{2}})^l \cdot  (|\varpi|^{sm}-|\varpi|^{s(|n+m|-n)}).$$ Then $$\sum_{n \in \Z}p_{n,m}(s)=$$ $$\sum_{n<\operatorname{min}\{-l_1,-m\}}c^{2n}|\varpi|^n a_{n,m}\cdot(|\varpi|^{sm}-|\varpi|^{(-2n-m)s})\cdot \frac{(c\alpha|\varpi|^{\frac{1}{2}})^{\operatorname{max}\{l_1,-(n+m)\}}-(c\alpha|\varpi|^{\frac{1}{2}})^{-n}}{1-c\alpha|\varpi|^{\frac{1}{2}}}$$ and so to show $\sum_{m \in \N}c^md_m |\varpi|^{-\frac{m}{2}}\sum_{n\in \Z}p_{n,m}(s) \overset{1}{\sim}0$, it is suffcient to check \beqna \label{4} \sum_{0\le m < l_1}c^md_m |\varpi|^{-\frac{m}{2}}\cdot \big(\sum_{-l_1-m < n < -l_1}p_{n,m}(s)\big) \overset{1}{\sim}0 &&\\ \label{5} \sum_{0\le m < l_1}c^md_m |\varpi|^{-\frac{m}{2}}\cdot \big(\sum_{n\le -l_1-m}p_{n,m}(s) \big)\overset{1}{\sim}0 &&\\ \label{6} \sum_{m \ge l_1}c^md_m |\varpi|^{-\frac{m}{2}}\cdot \big(\sum_{-l_1-m < n \le -m}p_{n,m}(s)\big) \overset{1}{\sim}0 &&\\ \label{7} \sum_{m \ge l_1}c^md_m |\varpi|^{-\frac{m}{2}}\cdot \big(\sum_{n\le -l_1-m}p_{n,m}(s)\big) \overset{1}{\sim}0 . \eeqna
\noindent For each $0\le m<l_1, -l_1-m\le n<-l_1$, $$c^md_m|\varpi|^{-\frac{m}{2}}p_{n,m}(s)\overset{1}{\sim}0 $$ and so (\ref{4}) easily follows.\\
\noindent For each $m \in \N$, $$\sum_{n \le -l_1-m}p_{n,m}(s)\approx (c^{-2}|\varpi|^s)^m \cdot g_1(s) - (c^{-1}\alpha |\varpi|^{\frac{1}{2}+s})^m \cdot g_2(s) $$ where $$g_1(s)=\frac{(c^{-1}\alpha|\varpi|^{\frac{3}{2}})^{l_1}}{1-c^{-2}|\varpi|}-\frac{(c^{-1}\alpha|\varpi|^{\frac{3}{2}+2s})^{l_1}}{1-c^{-2}|\varpi|^{1+2s}}, \text{ }g_2(s)=\frac{(c^{-1}\alpha |\varpi|^{\frac{3}{2}})^{l_1}}{1-c^{-1}\alpha|\varpi|^{\frac{3}{2}}}-\frac{(c^{-1}\alpha |\varpi|^{\frac{3}{2}+2s})^{l_1}}{1-c^{-1}\alpha|\varpi|^{\frac{3}{2}+2s}}$$and so (\ref{5}) and (\ref{7}) follow from this.\\

\noindent For each $-l_1 < k< 0$, note that $$\sum_{m \ge l_1}c^md_m|\varpi|^{-\frac{m}{2}}p_{k-m,m}(s)\approx$$ $$(1-|\varpi|^{-2ks})\cdot \sum_{m\ge l_1}(c_1|\varpi|^{sm}+c_2(c^{-2}|\varpi|^s)^{2m})\cdot \big((c\alpha|\varpi|^{\frac{1}{2}})^{l_1}-(c\alpha|\varpi|^{\frac{1}{2}})^{m-k}\big)\sim0$$ and so we have (\ref{6}).\\

\noindent To prove $$\sum_{m \in \N}c^md_m |\varpi|^{-\frac{m}{2}}\sum_{n\in \Z}c^{2n}|\varpi|^n a_{n,m}\sum_{l\ge l_1, l\ge-n}r_{l,m,n}(s) \overset{1}{\sim} 0, $$ we first decompose $\sum_{n\in \Z}c^{2n}|\varpi|^n a_{n,m}\sum_{l\ge l_1, l\ge-n}r_{l,m,n}(s) $ for fixed $m$ into three components $$\sum_{n\ge0}c^{2n}|\varpi|^n a_{n,m}\cdot |\varpi|^{s(2n+m)} ( \frac{(c\alpha|\varpi|^{\frac{1}{2}+2s})^{l_1}}{1-c\alpha|\varpi|^{\frac{1}{2}+2s}}- \frac{(c\alpha|\varpi|^{\frac{1}{2}})^{l_1}}{1-c\alpha|\varpi|^{\frac{1}{2}}}\big)$$ $+$
$$\sum_{-m\le n<0}c^{2n}|\varpi|^n a_{n,m}\big(|\varpi|^{s(2n+m)}\cdot \frac{(c\alpha|\varpi|^{\frac{1}{2}+2s})^{\operatorname{max}\{l_1,-n\}}}{1-c\alpha|\varpi|^{\frac{1}{2}+2s}}-|\varpi|^{sm}\cdot \frac{(c\alpha|\varpi|^{\frac{1}{2}})^{\operatorname{max}\{l_1,-n\}}}{1-c\alpha|\varpi|^{\frac{1}{2}}}\big)$$ $+$
$$\sum_{n < -m}c^{2n}|\varpi|^n a_{n,m}\big(|\varpi|^{s(2n+m)}\cdot \frac{(c\alpha|\varpi|^{\frac{1}{2}+2s})^{\operatorname{max}\{l_1,-n\}}}{1-c\alpha|\varpi|^{\frac{1}{2}+2s}}-|\varpi|^{-s(2n+m)}\cdot \frac{(c\alpha|\varpi|^{\frac{1}{2}})^{\operatorname{max}\{l_1,-n\}}}{1-c\alpha|\varpi|^{\frac{1}{2}}}\big).$$



\noindent Using the asymptotic formulae of $d_m$ and $a_{n,m},$ one can easily see that $$\sum_{m \in \N}c^md_m |\varpi|^{-\frac{m}{2}}\sum_{n\ge0}c^{2n}|\varpi|^n a_{n,m}\cdot |\varpi|^{s(2n+m)} ( \frac{(c\alpha|\varpi|^{\frac{1}{2}+2s})^{l_1}}{1-c\alpha|\varpi|^{\frac{1}{2}+2s}}- \frac{(c\alpha|\varpi|^{\frac{1}{2}})^{l_1}}{1-c\alpha|\varpi|^{\frac{1}{2}}}\big) \overset{1}{\sim} 0.$$\\ \noindent Write $p_{n,m}^1(s)=$ $$c^md_m |\varpi|^{-\frac{m}{2}}c^{2n}|\varpi|^n a_{n,m}\big(|\varpi|^{s(2n+m)}\cdot \frac{(c\alpha|\varpi|^{\frac{1}{2}+2s})^{\operatorname{max}\{l_1,-n\}}}{1-c\alpha|\varpi|^{\frac{1}{2}+2s}}-|\varpi|^{sm}\cdot \frac{(c\alpha|\varpi|^{\frac{1}{2}})^{\operatorname{max}\{l_1,-n\}}}{1-c\alpha|\varpi|^{\frac{1}{2}}}\big)$$ and note that $p_{n,m}^1(s) \sim 0.$ The second sum is decomposed into $$\sum_{0\le m <l_1}\sum_{-m\le n < 0}p_{n,m}^1(s)+\sum_{l_1 \le m}\sum_{-l_1\le n < 0}p_{n,m}^1(s)+\sum_{l_1 \le m}\sum_{-m\le n < -l_1}p_{n,m}^1(s)$$ and since $\sum_{0\le m <l_1}\sum_{-m\le n < 0}p_{n,m}^1(s)$ is a finite sum, it is $\overset{1}{\sim} 0.$ For each $-l_1 \le n <0$, one can easily check $\sum_{l_1\le m}p_{n,m}^1(s) \overset{1}{\sim} 0$ and so $\sum_{l_1 \le m}\sum_{-l_1\le n < 0}p_{n,m}^1(s) \overset{1}{\sim} 0.$

\noindent If $n<-l_1,$ $$|\varpi|^{s(2n+m)}\cdot \frac{(c\alpha|\varpi|^{\frac{1}{2}+2s})^{\operatorname{max}\{l_1,-n\}}}{1-c\alpha|\varpi|^{\frac{1}{2}+2s}}-|\varpi|^{sm}\cdot \frac{(c\alpha|\varpi|^{\frac{1}{2}})^{\operatorname{max}\{l_1,-n\}}}{1-c\alpha|\varpi|^{\frac{1}{2}}}=0$$ and so $\sum_{l_1 \le m}\sum_{-m\le n < -l_1}p_{n,m}^1(s) =0$. Thus the second sum $\sum_{m \in \N} \sum_{-m<n \le 0}p_{n,m}^1(s)=0.$\\

\noindent To show the third sum is $\overset{1}{\sim}0$, write $p_{n,m}^2(s)=$ $$c^{m+2n}d_m |\varpi|^{n-\frac{m}{2}} a_{n,m}\big(|\varpi|^{s(2n+m)}\cdot \frac{(c\alpha|\varpi|^{\frac{1}{2}+2s})^{\operatorname{max}\{l_1,-n\}}}{1-c\alpha|\varpi|^{\frac{1}{2}+2s}}-|\varpi|^{-s(2n+m)}\cdot \frac{(c\alpha|\varpi|^{\frac{1}{2}})^{\operatorname{max}\{l_1,-n\}}}{1-c\alpha|\varpi|^{\frac{1}{2}}}\big).$$\\

\noindent We decompose $$\sum_{m \in \N}\sum_{n<-m}p_{n,m}^2(s)=\sum_{m \in \N}\sum_{-m-l_1<n<-m}p_{n,m}^2(s)+\sum_{m \in \N}\sum_{n\le-m-l_1}p_{n,m}^2(s).$$ Write $k=m+n$ and for each $-l_1<k<0$, $$\sum_{m \in \N} p_{k-m,m}^2(s) \approx \sum_{m \ge l_1} p_{k-m,m}^2(s)=c^k(c_1(c\alpha|\varpi|^s)^m+c_2(c^{-1}\alpha|\varpi|^s)^m)\cdot g_k(s) \overset{1}{\sim} 0$$ where $$g_k(s)=\frac{(c\alpha|\varpi|^{\frac{1}{2}})^{-k}}{1-c\alpha|\varpi|^{\frac{1}{2}+2s}}-\frac{(c\alpha|\varpi|^{\frac{1}{2}+s})^{-k}}{1-c\alpha|\varpi|^{\frac{1}{2}}}.$$

\noindent Thus $\sum_{m \in \N}\sum_{-m-l_1<n<-m}p_{n,m}^2(s)=0.$\\

\noindent Next, for each $m \in \N$, some calculation shows that $$\sum_{n\le-m-l_1}p_{n,m}^2(s)=c^md_m|\varpi|^{-\frac{m}{2}}\cdot k_2^2 \cdot (c^{-1}\alpha |\varpi|^{\frac{1}{2}+s})^m \cdot g(s) \text{ where }$$ $$g(s)=\frac{(c^{-1}\alpha |\varpi|^{\frac{3}{2}})^{l_1}}{(1-c\alpha |\varpi|^{\frac{1}{2}+2s})(1-c^{-1}\alpha|\varpi|^{\frac{3}{2}})} - \frac{(c^{-1}\alpha |\varpi|^{\frac{3}{2}+2s})^{l_1}}{(1-c\alpha |\varpi|^{\frac{1}{2}})(1-c^{-1}\alpha|\varpi|^{\frac{3}{2}+2s})}$$ and so $\sum_{m \in \N}\sum_{n\le-m-l_1}p_{n,m}^2(s) \sim 0.$
Thus we have showed (\ref{1})$\overset{1}{\sim}0$.\\

\noindent Now, we will show (\ref{2}) $\overset{1}{\sim}0$.
To do this, for each $-l_1<l<l_1$, we decompose $$\sum_{n\in \Z , m\ge0}|\varpi|^{-m}d_m\cdot \alpha^l (|\varpi|^{s(|n+m+l|+|n+l|)}-|\varpi|^{s(|n+m|+|n|)}) \cdot c_{n,m,l} $$ into three summands $\sum_{m \in \N , n\ge l_1 } + \sum_{m \in \N , -l_1<n< l_1 } +\sum_{m \in \N , n\le -l_1 }$ and show that each is $\overset{1}{\sim}0$. \\\\Write $f_{n,m,l}(s)=|\varpi|^{-m}d_m\cdot \alpha^l (|\varpi|^{s(|n+m+l|+|n+l|)}-|\varpi|^{s(|n+m|+|n|)}) \cdot c_{n,m,l}$ and  note that for each fixed $n,m,l$, \ $f_{n,m,l} \sim 0.$ \\For each $-l_1<l<l_1$, we see that

\noindent $$\sum_{m \in \N , n\ge l_1}f_{n,m,l}(s)\approx \big(\sum_{n\ge l_1}(c^2 |\varpi|^{1+2s})^n\big)\cdot (\sum_{m \in \N}c_1 \cdot (c^2|\varpi|^s)^m+c_2 |\varpi|^{sm})\cdot (|\varpi|^{2ls}-1)\overset{1}{\sim} 0.$$

\noindent For all $-l_1<n,l<l_1$, there exists $N_1 \in \N$ such that $N_1>2l_1$ and if $m \ge N_1$, then $c_{n,m,l} = (c|\varpi|^{\frac{1}{2}})^m \cdot f_{n,l}$ for some constants $f_{n,l}.$ Thus $$\sum_{m\ge0, -l_1<n<l_1}f_{n,m,l}(s) =\sum_{0\le m<N_1, -l_1<n<l_1}f_{n,m,l}(s)+$$ $$\sum_{m \ge N_1, -l_1<n<l_1}c_1 (c^2|\varpi|^s)^m+c_2 |\varpi|^{sm}\big)\cdot(|\varpi|^{s(n+l+|n+l|)}-|\varpi|^{s(n+|n|)})\alpha^{l}\cdot f_{n,l}$$ and so $$\sum_{m\ge0, -l_1<n<l_1}f_{n,m,l}(s) \overset{1}{\sim}0.$$

\noindent Next we decompose $$\sum_{m\ge0, n \le- l_1}f_{n,m,l}$$ into four summands $$\sum_{n \le- l_1,m+n\ge \operatorname{max}\{-l,0\}}+\sum_{n \le- l_1,-l\le m+n<0}+\sum_{n \le- l_1,0\le m+n<-l}+\sum_{n \le- l_1,m+n< \operatorname{min}\{-l,0\}}.$$ \\
\noindent The first sum is zero. The second sum is $\sum_{-l\le k <0}\sum_{m \ge k+l_1}f_{k-m,m,l}$ and for each $-l\le k <0$, there exists $N_2\in \N$ such that $N_2\ge l_1$ and if $m \ge N_2$, then $c_{k-m,m,l}\approx |\varpi|^{\frac{m}{2}}\cdot c^{-m}$. Thus $$\sum_{-l\le k <0}\sum_{m \ge k+l_1}f_{k-m,m,l}\approx$$ $$(\sum_{k+l_1\le m < N_2}f_{k-m,m,l}) + \Big((1-|\varpi|^{-2ks})\cdot \sum_{m \ge N_2}(c_1 |\varpi^s|^{m}+c_2|c^{-2}\varpi^{s}|^{m})\Big)\overset{1}{\sim}0.$$ Similarly, we can show the third sum $\overset{1}{\sim}0.$\\
\noindent The fourth sum is decomposed into $$\sum_{n \le- l_1,-l_1<m+n< \operatorname{min}\{-l,0\}}f_{n,m,l}+\sum_{n \le- l_1,m+n\le-l_1}f_{n,m,l}$$ and as we have done in the above, it is easy to see $$\sum_{n \le- l_1,-l_1<m+n< \operatorname{min}\{-l,0\}}f_{n,m,l} \overset{1}{\sim}0.$$ Note
$$\sum_{n \le- l_1,m+n\le-l_1}f_{n,m,l}=\sum_{0\le m <l_1,n \le- l_1,m+n\le-l_1}f_{n,m,l}+\sum_{m\ge l_1,n \le- l_1,m+n\le-l_1}f_{n,m,l}.$$ For each $0\le m <l_1,$ $$\sum_{n\le-l_1-m}f_{n,m,l}\approx d_m(c|\varpi|^{-(\frac{3}{2}+s)})^m\cdot (|\varpi|^{-2ls}-1)\cdot \sum_{n\le-l_1-m}(c^2|\varpi|^{-(1+2s)})^n\overset{1}{\sim}0.$$ On the other hand,
$$\sum_{m\ge l_1}\sum_{n\le-l_1-m}f_{n,m,l}$$ $$\approx(|\varpi|^{-2ls}-1)(c_1(c^2|\varpi|^{-(1+s)})^m+c_2|\varpi|^{-(1+s)m})\cdot \sum_{n\le-l_1-m}(c^2|\varpi|^{-(1+2s)})^n$$ $$=(c^{-2}|\varpi|^{1+2s})^{l_1}\cdot (|\varpi|^{-2ls}-1)\cdot \big(\sum_{m\ge l_1}c_1\cdot|\varpi|^{sm}+c_2 \cdot (c^{-2}|\varpi|^{s})^m\big)\overset{1}{\sim} 0.$$\\
\noindent Thus we see that the fourth sum $\sum_{n \le- l_1,m+n< \operatorname{min}\{-l,0\}}f_{n,m,l}$ is also $\overset{1}{\sim} 0$ and we showed $(\ref{2}) \overset{1}{\sim} 0.$\\

\noindent Last, we will show (\ref{3}) $ \overset{1}{\sim} 0$.
\noindent To do this, write $\sum_{l\le -l_1}c^l  \alpha^l |\varpi|^{-\frac{l}{2}}  (|\varpi|^{s(|n+m+l|+|n+l|)}-|\varpi|^{s(|n+m|+|n|)}$ as $$\sum_{l\ge l_1}(c^{-1}\alpha^{-1}|\varpi|^{\frac{1}{2}})^{l} \cdot (|\varpi|^{s(|n+m-l|+|n-l|)}-|\varpi|^{s(|n+m|+|n|)}$$ and decompose it into three summands $$\sum_{l\ge l_1, l >(n+m)}\big( (c^{-1}\alpha^{-1}|\varpi|^{\frac{1}{2}+2s})^{l} \cdot  |\varpi|^{-s(2n+m)}-(c^{-1}\alpha^{-1}|\varpi|^{\frac{1}{2}})^{l}  \cdot |\varpi|^{s(|n+m|+|n|)}\big)$$ + $$\sum_{l\ge l_1, n< l \le n+m}(c^{-1}\alpha^{-1}|\varpi|^{\frac{1}{2}})^{l} \cdot  (|\varpi|^{sm}-|\varpi|^{s(|n+m|+|n|)})$$ + $$\sum_{l\ge l_1, l\le n}\big( (c^{-1}\alpha^{-1}|\varpi|^{\frac{1}{2}-2s})^{l}\cdot  |\varpi|^{s(2n+m)}-(c^{-1}\alpha^{-1}|\varpi|^{\frac{1}{2}})^{l}\cdot|\varpi|^{s(2n+m)}\big).$$

\noindent We write $M_{n,m}=\operatorname{max}\{l_1,m+n+1\}$. Then for fixed $m,n\in \N$ and small $\Re(s)>0$, $$\sum_{l\ge l_1, l >(n+m)}\big( (c^{-1}\alpha^{-1}|\varpi|^{\frac{1}{2}+2s})^{l} \cdot  |\varpi|^{-s(2n+m)}-(c^{-1}\alpha^{-1}|\varpi|^{\frac{1}{2}})^{l}  \cdot |\varpi|^{s(|n+m|+|n|)}\big)=$$ $$\frac{|\varpi|^{-s(2n+m)} (c^{-1}\alpha^{-1}|\varpi|^{\frac{1}{2}+2s})^{M_{n,m}} }{1-c^{-1}\alpha^{-1}|\varpi|^{\frac{1}{2}+2s} }-\frac{|\varpi|^{s(|n+m|+|n|)} (c^{-1}\alpha^{-1}|\varpi|^{\frac{1}{2}})^{M_{n,m}} }{1-c^{-1}\alpha^{-1}|\varpi|^{\frac{1}{2}}}.$$ Denote $$c^{2n+m}|\varpi|^{n-\frac{m}{2}}d_ma_{n,m} \cdot \Big(\frac{|\varpi|^{-s(2n+m)} (c^{-1}\alpha^{-1}|\varpi|^{\frac{1}{2}+2s})^{M_{n,m}} }{1-c^{-1}\alpha^{-1}|\varpi|^{\frac{1}{2}+2s} }-\frac{|\varpi|^{s(|n+m|+|n|)} (c^{-1}\alpha^{-1}|\varpi|^{\frac{1}{2}})^{M_{n,m}} }{1-c^{-1}\alpha^{-1}|\varpi|^{\frac{1}{2}}}\Big)$$ by $g_{n,m}(s)$ and note $g_{n,m}(s) \sim 0.$ We shall show $\sum_{m\ge0, n\in \N}g_{n,m}(s) \overset{1}{\sim}0.$ \\

\noindent Decompose $\sum_{m\ge0, n\in \N}g_{n,m}(s)$ into $$\sum_{m\ge0, n\ge0}g_{n,m}(s)+\sum_{m\ge-n, n<0}g_{n,m}(s)+\sum_{m<-n, n<0}g_{n,m}(s)$$ and the first sum decomposes again into $$\sum_{0\le m\le l_1-1}\sum_{0\le n \le l_1-m-1}g_{n,m}(s) +\sum_{0\le m\le l_1-1}\sum_{l_1-m\le n}g_{n,m}(s)+\sum_{l_1\le m}\sum_{0\le n }g_{n,m}(s).$$

\noindent Since the first term in the above is a finite sum, $\sum_{0\le m\le l_1-1}\sum_{0\le n \le l_1-m-1}g_{n,m}(s)\sim 0.$\\
For each $0\le m \le l_1-1$, $\sum_{l_1-m\le n}g_{n,m}(s)\sim \sum_{l_1\le n}g_{n,m}(s)\approx d_m(\alpha^{-1}|\varpi|^s)^m \cdot \sum_{n\ge l_1}g_n^1(s)$ where $$g_n^1(s)=\frac{(c\alpha^{-1}|\varpi|^{\frac{3}{2}})^n(c^{-1}\alpha^{-1}|\varpi|^{\frac{1}{2}+2s})}{1-c^{-1}\alpha^{-1}|\varpi|^{\frac{1}{2}+2s}}-\frac{(c\alpha^{-1}|\varpi|^{\frac{3}{2}+2s})^n(c^{-1}\alpha^{-1}|\varpi|^{\frac{1}{2}+2s})}{1-c^{-1}\alpha^{-1}|\varpi|^{\frac{1}{2}}}$$ and note that $\sum_{n \ge l_1}g_n^1(s) \sim 0.$ Thus the second term $\sum_{0\le m\le l_1-1}\sum_{l_1-m\le n}g_{n,m}(s) \sim 0$.\\

\noindent The third term $\sum_{l_1\le m}\sum_{0\le n }g_{n,m}(s)$ is $\sum_{l_1\le m}\sum_{0\le n<l_1 }g_{n,m}(s)+\sum_{l_1\le m}\sum_{l_1\le n }g_{n,m}(s)$. \\ For each $0\le n <l_1$, $\sum_{l_1\le m}g_{n,m}=\big(\sum_{l_1\le m}c_1(c\alpha^{-1}|\varpi|^{\frac{1}{2}+s})^m+c_2(c^{-1}\alpha^{-1}|\varpi|^{\frac{1}{2}+s})^m\big)\cdot g_n^2(s)$ where $$g_n^2(s)= (\frac{(c^{-1}\alpha^{-1}|\varpi|^{\frac{1}{2}})^n(c^{-1}\alpha^{-1}|\varpi|^{\frac{1}{2}+2s})}{1-c^{-1}\alpha^{-1}|\varpi|^{\frac{1}{2}+2s}}-\frac{(c^{-1}\alpha^{-1}|\varpi|^{\frac{1}{2}+2s})^n(c^{-1}\alpha^{-1}|\varpi|^{\frac{1}{2}})}{1-c^{-1}\alpha^{-1}|\varpi|^{\frac{1}{2}}}).$$ Thus $\sum_{l_1\le m}\sum_{0\le n<l_1 }g_{n,m}(s) \sim 0$ and $$\sum_{l_1\le m}\sum_{l_1\le n }g_{n,m}(s)\approx \big(\sum_{n \ge l_1}g_n^1(s)\big) \cdot \big(\sum_{l_1 \le m}c_1(c\alpha^{-1}|\varpi|^{\frac{1}{2}+s})^m+c_2(c^{-1}\alpha^{-1}|\varpi|^{\frac{1}{2}+s})^m\big)\sim0.$$ Since the above three components of $\sum_{m\ge0, n\ge0}g_{n,m}(s)$ are all $\sim 0$, $$\sum_{m\ge0, n\ge0}g_{n,m}(s) \sim 0.$$\\

\noindent Next we divide $\sum_{m\ge -n, n<0}g_{n,m}(s)=\sum_{m\ge -n, -l_1<n<0}g_{n,m}(s)+\sum_{m\ge -n, n \le -l_1}g_{n,m}(s).$ \\For each $-l_1<n<0$, $$\sum_{m\ge-n+l_1}g_{n,m}(s)=\big(\frac{c^{-1}\alpha^{-1}|\varpi|^{\frac{1}{2}+2s}}{1-c^{-1}\alpha^{-1}|\varpi|^{\frac{1}{2}+2s}}-\frac{c^{-1}\alpha^{-1}|\varpi|^{\frac{1}{2}}}{1-c^{-1}\alpha^{-1}|\varpi|^{\frac{1}{2}}}\big)k_n\cdot (c^{-1}\alpha^{-1}|\varpi|^{\frac{1}{2}})^n\cdot f_{n}(s)$$ where $k_n=\int_{F^3} \varphi(0,\varpi^{n}x_2,0) \cdot \varphi(x_1,x_2,x_3) dX$ and $$ f_n^1(s)=\frac{c_1(\alpha^{-1}|\varpi|^{\frac{1}{2}+s})^{-n+l_1}}{1-\alpha^{-1}|\varpi|^{\frac{1}{2}+s}}+\frac{c_2(c^{-2}\alpha^{-1}|\varpi|^{\frac{1}{2}+s})^{-n+l_1}}{1-c^{-2}\alpha^{-1}|\varpi|^{\frac{1}{2}+s}}.$$ Thus $$\sum_{m\ge-n,-l_1<n<0}g_{n,m}(s)=\sum_{-n\le m <-n+l_1,-l_1<n<0}g_{n,m}(s)+\sum_{m\ge-n+l_1,-l_1<n<0}g_{n,m}(s)\sim 0.$$\\

\noindent Next, we divide $$\sum_{m<-n,n<0}g_{n,m}(s)=\sum_{n<-l_1, 0\le m <-n-l_1}g_{n,m}(s)+\sum_{n<0, -n-l_1\le m <-n}g_{n,m}(s).$$ Again, $$\sum_{n<-l_1, 0\le m <-n-l_1}g_{n,m}(s)\sim \sum_{n\le -2l_1,0\le m <l_1}g_{n,m}(s)+\sum_{n\le-2l_1,l_1\le m <-n-l_1}g_{n,m}$$ and for each $0\le m<l_1$,
 $\sum_{n\le-2l_1}g_{n,m}=$ $$d_m(c|\varpi|^{-\frac{3}{2}-s})^m\cdot \big(\frac{(c^{-1}\alpha^{-1}|\varpi|^{\frac{1}{2}+2s})^{l_1}}{1-c^{-1}\alpha^{-1}|\varpi|^{\frac{1}{2}+2s}}-\frac{(c^{-1}\alpha^{-1}|\varpi|^{\frac{1}{2}})^{l_1}}{1-c^{-1}\alpha^{-1}|\varpi|^{\frac{1}{2}}}\big)\cdot \sum_{n\le-2l_1}(c^2|\varpi|^{-1-2s})^n \sim 0.$$
Note that $\sum_{n\le-2l_1,l_1\le m <-n-l_1}g_{n,m}=$ $$\big(\frac{(c^{-1}\alpha^{-1}|\varpi|^{\frac{1}{2}+2s})^{l_1}}{1-c^{-1}\alpha^{-1}|\varpi|^{\frac{1}{2}+2s}}-\frac{(c^{-1}\alpha^{-1}|\varpi|^{\frac{1}{2}})^{l_1}}{1-c^{-1}\alpha^{-1}|\varpi|^{\frac{1}{2}}}\big)\cdot \sum_{n\le-2l_1} f_n^2(s) \cdot(c^2|\varpi|^{-1-2s})^n$$ where $$f_n^2(s)=c_1 \cdot \frac{(c^2|\varpi|^{-1-s})^{l_1}-(c^2|\varpi|^{-1-s})^{-n-l_1}}{1-c^2|\varpi|^{-1-s}}+c_2\cdot \frac{(|\varpi|^{-1-s})^{l_1}-(|\varpi|^{-1-s})^{-n-l_1}}{1-|\varpi|^{-1-s}}.$$ Thus $\sum_{n\le-2l_1, l_1\le m<-n-l_1}g_{n,m}(s)\overset{1}{\sim} 0$ and so $\sum_{n<-l_1, 0\le m <-n-l_1}g_{n,m}(s)\overset{1}{\sim}0$.\\

\noindent To show $\sum_{n<0, -n-l_1\le m <-n}g_{n,m}(s)\overset{1}{\sim}0$, let $k=n+m$ and for each $-l_1\le k <0$, we will check $\sum_{n<0}g_{n,k-n}(s) \overset{1}{\sim}0.$ $$\sum_{n<0}g_{n,k-n}(s) \sim \sum_{n<-2l_1}g_{n,k-n}(s)\approx  (c_1\cdot \sum_{n<-2l_1}|\varpi|^{(-\frac{1}{2}-s)n}+c_2\cdot \sum_{n<-2l_1}(c^2|\varpi|^{(-\frac{1}{2}-s)})^n)$$ where $$f_n^3(s)=\frac{(c^{-1}\alpha^{-1}|\varpi|^{\frac{1}{2}+2s})^{l_1}}{1-c^{-1}\alpha^{-1}|\varpi|^{\frac{1}{2}+2s}}-\frac{(c^{-1}\alpha^{-1}|\varpi|^{\frac{1}{2}})^{l_1}}{1-c^{-1}\alpha^{-1}|\varpi|^{\frac{1}{2}}}.$$Thus $\sum_{n<0, -n-l_1\le m <-n}g_{n,m}(s)\overset{1}{\sim}0$ and so we checked $$\sum_{l\ge l_1, l >(n+m)}\big( (c^{-1}\alpha^{-1}|\varpi|^{\frac{1}{2}+2s})^{l} \cdot  |\varpi|^{-s(2n+m)}-(c^{-1}\alpha^{-1}|\varpi|^{\frac{1}{2}})^{l}  \cdot |\varpi|^{s(|n+m|+|n|)}\big) \overset{1}{\sim} 0.$$\\

\noindent Next we turn to show $$\sum_{n\in \Z , m\ge0}c^{2n+m}|\varpi|^{n-\frac{m}{2}}d_ma_{n,m}\cdot \Big(\sum_{l\ge l_1, n< l \le n+m}(c^{-1}\alpha^{-1}|\varpi|^{\frac{1}{2}})^{l} \cdot  (|\varpi|^{sm}-|\varpi|^{s(|n+m|+|n|)})\Big)\sim 0.$$ It equals $\sum_{m\ge0}c^m d_m|\varpi|^{-\frac{m}{2}}\sum_{n\ge0}f_{n,m}(s)$ where $$f_{n,m}(s)=c^{2n}|\varpi|^{n}a_{n,m}\cdot \Big(\sum_{l\ge l_1, n< l \le n+m}(c^{-1}\alpha^{-1}|\varpi|^{\frac{1}{2}})^{l} \cdot  (|\varpi|^{sm}-|\varpi|^{s(2n+m)})\Big).$$ Then $$\sum_{n\ge0}f_{n,m}(s)=\sum_{0\le n\le l_1-1}f_{n,m}(s)+\sum_{l_1 \le n}f_{n,m}(s)$$ and $$\sum_{m\ge 0}c^m d_m|\varpi|^{-\frac{m}{2}}\big(\sum_{0\le n\le l_1-1}f_{n,m}(s)\big)\sim \sum_{m\ge 2l_1}c^m d_m|\varpi|^{-\frac{m}{2}}\big(\sum_{0\le n\le l_1-1}f_{n,m}(s)\big)=$$  $$\sum_{0\le n\le l_1-1}c^{2n}|\varpi|^{n}(1-|\varpi|^{2ns})\sum_{m\ge2 l_1}c^m d_m|\varpi|^{(s-\frac{1}{2})m}a_{n,m}\cdot \frac{(c^{-1}\alpha^{-1}|\varpi|^{\frac{1}{2}})^{l_1}-(c^{-1}\alpha^{-1}|\varpi|^{\frac{1}{2}})^{n+m+1}}{1-c^{-1}\alpha^{-1}|\varpi|^{\frac{1}{2}}}.$$

\noindent For each $0\le n \le l_1-1,$ $$\sum_{m\ge2 l_1}c^m d_m|\varpi|^{(s-\frac{1}{2})m}a_{n,m}\cdot \frac{(c^{-1}\alpha^{-1}|\varpi|^{\frac{1}{2}})^{l_1}-(c^{-1}\alpha^{-1}|\varpi|^{\frac{1}{2}})^{n+m+1}}{1-c^{-1}\alpha^{-1}|\varpi|^{\frac{1}{2}}} \overset{1}{\sim} 0$$ and so $$\sum_{m\ge 0}c^m d_m|\varpi|^{-\frac{m}{2}}\big(\sum_{0\le n\le l_1-1}f_{n,m}(s)\big)\overset{1}{\sim} 0.$$

\noindent For each $m \in \N$, $$\sum_{n\ge l_1}f_{n,m}\approx|\varpi|^{sm}\big(1-(c^{-1}\alpha^{-1}|\varpi|^{\frac{1}{2}})^m\big)\cdot \big(\sum_{n \ge l_1}(c\alpha^{-1}|\varpi|^{\frac{3}{2}})^n-(c\alpha^{-1}|\varpi|^{\frac{3}{2}+2s})^n\big).$$ Thus $\sum_{m\ge0}c^m d_m|\varpi|^{-\frac{m}{2}}\sum_{n\ge l_1}f_{n,m} \overset{1}{\sim} 0$ and so we showed $$\sum_{n\in \Z , m\ge0}c^{2n+m}|\varpi|^{n-\frac{m}{2}}d_ma_{n,m}\cdot \Big(\sum_{l\ge l_1, n< l \le n+m}(c^{-1}\alpha^{-1}|\varpi|^{\frac{1}{2}})^{l} \cdot  (|\varpi|^{sm}-|\varpi|^{s(|n+m|+|n|)})\Big) \overset{1}{\sim}0.$$\\

\noindent Finally, we investigate the last sum
\begin{equation}\label{lastsum}\sum_{n\in \Z , m\ge0}c^{2n+m}|\varpi|^{(1+2s)n+(s-\frac{1}{2})m}d_ma_{n,m}\cdot \Big(\sum_{ l_1\le l\le n}\big( c^{-1}\alpha^{-1}|\varpi|^{\frac{1}{2}-2s})^{l}-(c^{-1}\alpha^{-1}|\varpi|^{\frac{1}{2}})^{l}\Big).\end{equation}
\noindent It equals $$k_1^1\cdot \sum_{m\ge0}d_m(c|\varpi|^{s-\frac{1}{2}})^m\big(\sum_{n\ge l_1}(c^2|\varpi|^{1+2s})^n\cdot g_n(s)\big)\text{ where }$$ $g_n(s)=$ $$\frac{(c^{-1}\alpha^{-1}|\varpi|^{\frac{1}{2}-2s})^{l_1}-(c^{-1}\alpha^{-1}|\varpi|^{\frac{1}{2}-2s})^{n+1}}{1-c^{-1}\alpha^{-1}|\varpi|^{\frac{1}{2}-2s}}-\frac{(c^{-1}\alpha^{-1}|\varpi|^{\frac{1}{2}})^{l_1}-(c^{-1}\alpha^{-1}|\varpi|^{\frac{1}{2}})^{n+1}}{1-c^{-1}\alpha^{-1}|\varpi|^{\frac{1}{2}}}.$$ Thus $\sum_{n\ge l_1}(c^2|\varpi|^{1+2s})^n\cdot g_n(s) \sim 0$ and $\sum_{m\ge 0}d_m(c|\varpi|^{s-\frac{1}{2}})^m \overset{2}{\sim}0$, and so we see that \ref{lastsum} $\overset{1}{\sim}0.$\\
\noindent We have checked (\ref{3})$\overset{1}{\sim} 0$.\\

\noindent Putting all these things together, we verified our claim (\ref{lem2}).

\end{proof}

\begin{remark}\label{5.2}The assumption that $F$ is totally real and $E$ is totally complex in our Theorem \ref{thm} is not so crucial and can be dropped out. For archimedean places $v$ which splits in $E$, we can conduct the similar computation as in the $p$-adic case because we have (\ref{height2}) at hands and the asymptotic behavior of various matrix coefficient are of the similar shape. For brevity, we omit the details.

\end{remark}

\section*{Acknowledgements}
The author expresses deep gratitude to his advisor professor Haseo Ki for his constant support and encouragement during this work. We also thank to professor Atsushi Ichino for his suggestion this problem and many helpful discussion. As being evident to the reader, this paper owes much to Shunsuke Yamana's result. So I would like to thank to him for making his preprint available as well as answering my question. This work was supported by the National Research Foundation of Korea(NRF) grant funded by the Korea government(MSIP)(ASARC, NRF-2007-0056093).

\end{document}